\numberwithin{equation}{section}
\newtheorem{thm}{Theorem}[section]
\newtheorem{prop}[thm]{Proposition}
\newtheorem{lem}[thm]{Lemma}
\newtheorem{exer}[thm]{Exercise}
\theoremstyle{definition}
\newtheorem{definition}[thm]{Definition}
\newtheorem{rem}[thm]{Remark}
\theoremstyle{remark}
\newtheorem{example}[thm]{Example}
\begin{document}

\title[Canonical bundles of moving frames]{Canonical bundles of moving frames for parametrized curves in Lagrangian Grassmannians: algebraic approach}

\author
[Igor Zelenko]{Igor Zelenko
	\address{Department of Mathematics, Texas A$\&$M University,
		College Station, TX 77843-3368, USA; E-mail: zelenko@math.tamu.edu}
}
\thanks{ During the preparation of these notes I.\ Zelenko was partly supported by NSF grant DMS-1406193 and Simons Foundation Collaboration Grant for Mathematicians 524213.}

\subjclass[2010] {53C30, 53A55, 14M15}
\keywords{ Curves in homogeneous spaces, moving frames, symmetries, Lagrangian Grassmannian}
\begin{abstract}
The aim of these notes is to describe how to construct  canonical bundles  of moving frames and differential invariants for parametrized curves in Lagrangian Grassmannians, at least in the monotonic case. Such curves appear as Jacobi curves of sub-Riemannian extremals \cite{agrgam1, jac1} . Originally this construction was done in \cite{li1, li2}, where it uses the specifics of Lagrangian Grassmannian. In later works \cite{flag2, flag1}  a much  more general theory for construction of canonical bundles of moving frames for parametrized or unparametrized  curves in the so-called generalized flag varieties was developed, so that the problem which  is discussed here can be considered as a particular case of this general theory. Although this was briefly discussed at the very end of \cite{flag2}, the application of the theory of \cite{flag2, flag1} to obtain the results of \cite{li1, li2} were never written in detail and this is our goal here. We believe that  this exposition gives a more conceptual point of view on the original results of \cite{li1,li2} and especially clarifies the origin of the normalization conditions of the canonical bundles of moving frames there, which in fact boil down to 
a choice of a complement to a certain subspace of the symplectic Lie algebra. The notes in almost the same form are included under my authorship as the Appendix in  the book ``A Comprehensive Introduction to sub-Riemannian Geometry from Hamiltonian view point'' by A. Agrachev D. Barilari, U. Boscain. Following the style of this book, some not difficult statements are given as exercises for pedagogical purposes.
\end{abstract}

\maketitle

\section{Preliminaries}

\subsection{Basics on moving frames and structure functions}

Throughout these notes $I$ denotes an interval of $\mathbb R$ and the  parameter $t$ takes values in $I$.   
We first start with more elementary and naive point of view on moving frames. For a moment, by a moving frame in $\mathbb R^n$ we mean an $n$-tuple 
$E(t)=\bigl(e_1(t),\ldots e_n(t)\bigr)$ of vectors such that $E(t)$ constitute a basis of $\mathbb R^n$ for every $t$ and it smoothly depends on $t\in I$. We can regard  $E(t)$ as an $n\times n$ matrix with $i$th column equal to the column vector $e_i(t)$, or an element of the Lie group $\mathrm{GL}_n$. So, the moving frame $E(t)$ can be seen as a smooth curve in this Lie group.

The velocity $e_j'(t)$ of the $j$th vector $e_j(t)$ of the moving frame $E(t)$ can be decomposed into the linear combination with respect to the basis $E(t)$, i.e. there exist scalars $r_{ij}(t)$ such that
\begin{equation}
\label{mf1}
e_j'(t)=\sum_{i=1}^n r_{ij}(t) e_i(t), \quad 1\leq j\leq n,\,\ t\in I
\end{equation}
Let $R(t)$ be the $n\times n$ matrix with the $ij$th entry $r_{ij}(t)$. Then \eqref{mf1} is equivalent to 
\begin{equation}
\label{mf2}
E'(t)=E(t) R(t),
\end{equation}
The equation \eqref{mf2} is called the \emph{structure equation of the moving frame $E(t)$} and the matrix function $R(t)$, is called the \emph{structure function of the moving frame $E(t)$}.

\begin{rem}
\label{Maurerrem}
Recall that the (left) Maurer-Cartan form $\Omega$ on a Lie group $G$ with the Lie algebra $\mathfrak g$ is the $\mathfrak g$-valued one-form such that for any $X\in T_a G, a\in G$
\begin{equation}
\label{MC}	
\Omega_a(X):=d(L_a^{-1}) X,
\end{equation}	
where $L_a$ denotes the left translation by an element $a$ in $G$. For a matrix Lie group $\Omega_a(X):=a^{-1}X$, where in the right-hand side the matrix multiplication is used.  Note that \eqref{mf2} can be written as 
\begin{equation}
\label{mf25}
R(t)=E(t)^{-1}E'(t),
\end{equation}
which is equivalent to 
\begin{equation} 
\label{mf3}
R(t)=\Omega_{E(t)}\bigl(E'(t)\bigr),
\end{equation}
where $\Omega$ is the Maurer-Cartan form of $GL_n$, i.e. the structure function of the frame $E(t)$ is equal to the value of the Maurer-Cartan form at the velocity to the frame.
\end{rem}
Now let $G$ be a Lie subgroup of $\mathrm{GL}_n$ with Lie algebra $\mathfrak g$. 
We will say that a moving frame $E(t)$ is \emph{$G$-valued} if $E(t)$, considered as an $n\times n$- matrix, belongs to $G$.   
From \eqref{mf3} it follows that the structure function of a $G$-valued frame takes value in the Lie algebra $\mathfrak g$.

Two $G$-valued moving frames $E(t)$ and $\widetilde E(t)$ are called equivalent  with respect to $G$, or \emph{$G$-equivalent}, if there exists $A\in G$ such that $\tilde e_j(t)= A e_j(t)$ for any $1\leq j\leq n$ and $t\in I$ or, equivalently, in the matrix form
\begin{equation}
\label{equiv1}
\widetilde E(t)=A E(t), \quad \forall t\in I
\end{equation} 
The following simple lemma is fundamental for the applications of moving frames in geometry of curves:

\begin{lem}
\label{fundlem}
\begin{enumerate}
\item Two $G$-valued moving frames $E(t)$ and $\widetilde E(t)$ with the structure functions $R(t)$ and $\widetilde R(t)$, respectively  are $G$-equivalent if and only if $R(t)\equiv \widetilde R(t)$ on $I$. 

\item Given any function $R:I\rightarrow \mathfrak g$ there exists the unique, up to the action of $G$,  $G$-valued moving frame with the structure function  $R(t)$.
\end{enumerate}	
\end{lem}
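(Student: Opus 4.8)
The plan is to rest everything on the structure equation in the form \eqref{mf25}, namely $R(t)=E(t)^{-1}E'(t)$, equivalently $E'(t)=E(t)R(t)$, handling part (1) by direct differentiation and part (2) by the existence–uniqueness theory for linear matrix ODEs. Throughout, the essential algebraic facts I will invoke are that $G$ is a group and that, for a matrix Lie subgroup, the tangent space at a point $X\in G$ is the left translate $T_XG=X\,\mathfrak g$ of $\mathfrak g=T_eG$.

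For the forward implication of (1), I would assume $\widetilde E(t)=AE(t)$ with $A\in G$ \emph{constant}, differentiate to get $\widetilde E'(t)=AE'(t)$, and substitute into $\widetilde R(t)=\widetilde E(t)^{-1}\widetilde E'(t)$; the constant factor $A$ cancels and $\widetilde R\equiv R$ follows at once. For the converse, the key device is to introduce $A(t):=\widetilde E(t)E(t)^{-1}$ and show it is constant. Differentiating, using $(E^{-1})'=-E^{-1}E'E^{-1}$ together with $E'=ER$ and $\widetilde E'=\widetilde E\widetilde R$, I would obtain $A'(t)=\widetilde E(t)\bigl(\widetilde R(t)-R(t)\bigr)E(t)^{-1}$, so the hypothesis $R\equiv\widetilde R$ forces $A'\equiv 0$ and hence $A(t)\equiv A$ is a constant matrix. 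Since $\widetilde E(t)$ and $E(t)^{-1}$ lie in the group $G$, so does their product $A$, and the identity $\widetilde E=AE$ exhibits the required $G$-equivalence.

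For (2), uniqueness up to the $G$-action is immediate from (1): any two $G$-valued frames with the same structure function $R$ are $G$-equivalent. For existence, given $R:I\to\mathfrak g$ I would fix $t_0\in I$ and $E_0\in G$ and solve the linear matrix ODE $E'=ER$ with initial condition $E(t_0)=E_0$. Because the equation is linear, the standard theorem guarantees a unique solution defined on \emph{all} of $I$, which remains invertible and is therefore $\mathrm{GL}_n$-valued. The delicate point, which I expect to be the main obstacle, is to verify that this solution actually stays inside the subgroup $G$ rather than merely in $\mathrm{GL}_n$; this is the only place where the hypothesis that $R$ is $\mathfrak g$-valued (and not merely $\mathfrak{gl}_n$-valued) enters. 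Here I would argue by tangency: the time-dependent vector field $X\mapsto XR(t)$ on $\mathrm{GL}_n$ sends a point $X\in G$ to $XR(t)\in X\,\mathfrak g=T_XG$, since $R(t)\in\mathfrak g$, so it is tangent to $G$ at every point of $G$. Invoking the standard fact that an integral curve of a vector field tangent to a submanifold and issued from a point of that submanifold remains on it, I conclude $E(t)\in G$ for all $t\in I$, completing the construction.
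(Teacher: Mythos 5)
Your argument is correct, and it differs from the paper's in two places worth noting. The forward direction of (1) and the reduction of uniqueness in (2) to part (1) are the same as in the paper. For the converse of (1), the paper sets $A:=\widetilde E(t_0)E(t_0)^{-1}$, observes that $AE(t)$ and $\widetilde E(t)$ solve the same linear ODE $\Gamma'=\Gamma R$ with the same value at $t_0$, and concludes by the uniqueness theorem for linear ODEs; you instead differentiate $A(t)=\widetilde E(t)E(t)^{-1}$ and compute $A'(t)=\widetilde E(t)\bigl(\widetilde R(t)-R(t)\bigr)E(t)^{-1}\equiv 0$, so that $A$ is constant and lies in $G$ as a product of elements of $G$. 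The computation is right, and this route is arguably more elementary: it replaces the ODE uniqueness theorem by the fact that a map with vanishing derivative on an interval is constant, at the cost of a slightly longer differentiation. For existence in (2), the paper merely cites the existence theorem for linear systems and leaves implicit the point you correctly flag as delicate, namely why the solution of $E'=ER$ with $E(t_0)\in G$ remains in the subgroup $G$ and not just in $\mathrm{GL}_n$; your tangency argument ($XR(t)\in X\,\mathfrak g=T_XG$, so the time-dependent vector field is tangent to $G$) is exactly the right justification. The only refinement to add is that for a Lie subgroup that is not embedded one should phrase this as: $G$ is a maximal integral manifold of the involutive left-invariant distribution $X\mapsto X\,\mathfrak g$ on $\mathrm{GL}_n$, and an integral curve of a vector field lying in such a distribution stays in a single leaf; the naive statement for embedded submanifolds does not literally apply, though the conclusion is the same.
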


\begin{proof} The ``only if'' part of the first statement of the lemma is trivial, because
	if $\widetilde E(t)=A E(t)$, then 
	$$\widetilde R(t)=\bigl(\widetilde E(t)\bigr)^{-1} \widetilde E'(t)=\bigl(E(t)^{-1} A^{-1}\bigr) \bigl(A E'(t)\bigr)=E(t)^{-1} E'(t)=R(t).$$
	
	For the ``if'' part take $t_0\in I$ and let $A:=\widetilde E(t_0) E(t_0)^{-1}$. Then clearly  
	\begin{equation}
	\label{initial}
	\widetilde E(t_0) =A E(t_0).
	\end{equation}
 Further, by the same arguments as in the previous part, the moving frame $AE(t)$ has the same structure function as $E(t)$ and therefore, by our assumptions, as $\widetilde E(t)$. In other words,  the frames $AE(T)$ and $\widetilde E(t)$ satisfy the same system of linear ODEs. Since by \eqref{initial} they meet the same initial conditions at $t_0$, we have \eqref{equiv1} by the uniqueness theorem for linear ODEs. 
 
The existence claim of the second statement of the lemma follows from the existence theorem for linear systems of ODEs, while the uniqueness part follows from the ``if'' part of the first statement. 
\end{proof}

\subsection{Applications to geometry of curves in Euclidean space}
The previous lemma is the basis for application of moving frames and construction of the complete system of invariants for various types of curves  with respect to the action of various groups. Perhaps, every Differential Geometry student quickly  encounters the  Frenet-Serret moving frame in the study of curves in Euclidean space up to a rigid motion\footnote{Here by the rigid motion we mean the map $x\mapsto a+Ux$, where $U\in O_n$, while often one assumes that $U\in SO_n$}. Recall its construction: Assume that a curve $\gamma(t)$ in $\mathbb R^n$ is parametrized by an arc length and for simplicity \footnote{Often, especially in the case of a modification of the problem mentioned in the previous footnote,one makes a weaker assumption that $\dim \,\,\mathrm {span} \{\gamma'(t),\ldots \gamma^{(n-1)}(t)\}=n-1$}  satisfies the following regularity assumption:

\begin{equation}
\label{reg}
\mathrm {span} \{\gamma'(t),\ldots \gamma^{(n)}(t)\}=R^n
\end{equation}
i.e  $\Bigl(\gamma'(t),\ldots \gamma^{(n)}(t)\Bigr)$ is a moving frame. The Frenet-Serret moving frame is obtained from this frame by the Gram-Schmidt orthogonalization procedure. This is $O_n$-valued (or orthonormal) moving frame\footnote{In the case of the weaker assumption  of the previous footnote related to the problem mentioned in the first footnote one uses the Gram-Schmidt orthogonalization for the tuple of vectors $\{\gamma'(t),\ldots \gamma^{(n-1)}(t)\}$ to construct $n-1$ unit and pairwise  orthogonal vectors and then completes this to $\mathrm{SO}_n$-valued frame.} uniquely assigned to each curve with the properties above and two curves can be transformed one to another by a rigid motion if and only of their Frenet-Serret frames are $O_n$-equivalent, which by Lemma \ref{fundlem} is equivalent to the fact that the structure functions  of their Frenet-Serret frames coincide. By constructions, these structure functions are $\mathfrak {so}_n$-valued (i.e. skew-symmetric) such that  the following condition holds:
\medskip

{\bf \emph{The normalization condition for the Frenet-Serret frame:}} \emph{ The only possible nonzero entries below the diagonal are $(j+1, j)$-entry with $1\leq j\leq n-1$.}
\medskip
These $n-1$ entries completely determine the structure function by skew-symmetricity and therefore, again by Lemma \ref{fundlem}, constitute  constitute its complete system of invariants. The $(2,1)$-entry classically called the curvature of the curve, the $(3,2)$-entry is classically called the torsion, at least for $n=3$, and for higher dimensions all other non-zero entries are called higher  order curvatures.

Already in this classical example one encounters, at least implicitly, the notion of the curve of osculating flags, which will be important in the sequel. 
Recall that a \emph{flag} in a vector space $V$ or a \emph{filtration} of $V$ is a collection of nested subspaces of $V$. The flag is 
called \emph{complete}, if each dimension between $0$ and $\dim V$ appears exactly once in the collection of dimensions of the subspaces of the flag.  To a curve $\gamma$,  satisfying  regularity assumption \eqref{reg}, one can assign the following curve of complete flags in 
$\mathbb R^n$:
\begin{equation}
\label{oscflag}
0\subset\mathrm{span}\{\gamma'(t)\}\subset \mathrm{span}\{\gamma'(t),\gamma''(t)\}\subset\ldots \subset \mathrm{span}\{\gamma^{(j)}(t)\}_{j=1}^{i}\subset\ldots\subset  \mathrm{span}\{\gamma^{(j)}(t)\}_{j=1}^{n}=\mathbb R^n,
\end{equation}
which is called the \emph{curve of osculating flags}, associated with the curve $\gamma$. A moving frame $(v_1(t),\ldots, v_n(t)$ is called \emph{adapted} to the curve of osculating flags \eqref{oscflag}, if $\mathrm{span}\{v
	_j(t)\}_{j=1}^{i}=\mathrm{span}\{\gamma^{(j)}(t)\}_{j=1}^{i}$ for all $1\leq i\leq n$. The Gram-Schmidt orthogonalization procedure is nothing but the procedure of construction of the orthonormal frame adapted to the curve of flag \eqref{oscflag} and such that the $i$th vector of the frame points toward the same half-space of the $i$th subspaces of the flag \eqref{oscflag} with respect to the $(i-1)$th subspace of this flag as the vector $\gamma^{(i)}(t)$. In this case there is exactly one such adapted frame.
	
	In this example, one  can see another important point for our exposition. The Frenet-Serret frame can be described in terms of its structure function without referring to the Gram-Schmidt orthogonalization: The Frenet-Serret frame  of a curve $\gamma$ parametrized by an arc length  is the only orthonormal frame such that the very first vector is $\gamma'(t)$ and the structure function satisfies the \emph{normalization conditions} above with all nonzero entries being positive.

\subsection{More general point of view: homogeneous spaces and moving frames as lifts to the group, defining the equivalence}
\label{liftsec}
Equivalence problem for curves in $\mathbb R^n$ up to a rigid motion is a particular case of equivalence problem for curves in a homogeneous space. In more detail, given a Lie group $G$ and its closed subgroup $G^0$, the space $G/G^0$ of the left cosets of $G^0$ is a smooth manifold with the natural transitive action of $G$ induced by the left translation on $G$. The space $G/G^0$ is called a \emph{homogeneous space} of the group $G$.  Two curves in $G/G^0$ are called equivalent if there exist and element of $G$ sending one curve to another. In the case of curves in $\mathbb R^n$ up to a rigid motion, $G$ is the group of rigid motion, denoted by $\mathrm{AO}_n$ and $G^0$ can be taken as  its subgroup preserving the origin of $\mathbb R^n$, i.e. the group of orthogonal transformations $O_n$, so that $\mathbb R^n\cong \mathrm{AO}_n/O_n$. 

\emph{Grassmannians} and, more generally, \emph{flag varieties} provide another class of examples of homogeneous spaces. A flag variety is a set of flags of a vector space $V$ with fixed  dimensions of subspaces in these flags. Fix one of the flags and let $G^0$ be the subgroup of $\mathrm{GL}(V)$ preserving this flag. Then the flag variety can be identified with $GL(V)/G^0$.
A Grassmannian corresponds to a flag variety with flags consisting of a one subspace of a fixed dimension. 

If $V$ has some additional structure, then one can distinguish special flags and consider proper subgroups of $GL(V)$ as the group $G$. For example, let  $V$ be a $2m$-dimensional vector space endowed with a symplectic, i.e. a non-degenerate skew-symmetric, form $\sigma$. Given a subspace $\Lambda$ of $V$ let $\Lambda^\angle$ be the \emph{skew-orthogonal complement} of $\Lambda$ with respect to $\sigma$,
$$\Lambda^\angle=\{v\in V: \sigma(v, z)=0 \,\,\forall z\in \Lambda\} $$
Then one can distinguish the following classes of subspaces of $V$: a subspace $\Lambda$ of $V$  is called \emph{isotropic}, if $\Lambda \subset \Lambda^\angle$ or, equivalently. $\sigma|_{\Lambda}=0$, a subspace $\Lambda$ is called \emph{coisotropic}, if $\Lambda^\angle\subset\Lambda$, or equivalently, if $\Lambda^\angle$ is isotropic, and it is called  \emph{Lagrangian} if it is both isotropic and coisotropic, i.e. $\Lambda=\Lambda^\angle$. Since from nodegenericty of $\sigma$ subspaces $\Lambda$ and $\Lambda^\angle$ have complementary dimensions to $2m$, the dimension of isotropic spaces is not greater than $m$ and of the coisotropic subspaces is not smaller than $m$. Hence  the dimension of Lagrangian subspaces is equal to $m$. The set $L(V)$ of all Lagrangian subspaces is called the \emph{Lagrangian Grassmannian}. Let $\mathrm{Sp}(V)$ be the group of all symplectic transformations, i.e. of all $A\in \mathrm{GL}(V)$ preserving the form $\sigma$, $\sigma(Av, Aw)=\sigma(v,w)$. Since $\mathrm{Sp}(V)$ acts transitively on $L(W)$, the latter can be identified with $\mathrm{SP}(V)/G^0$, where $G^0$ is the subgroup of $\mathrm{Sp}(V)$ preserving one chosen Lagrangian subspace $\Lambda$.

Initially we defined a moving frame as a one-parametric family  of bases in $\mathbb R^n$. However the most important thing in the equivalence problem was the structure function of the frame and in order to define it we  essentially used the corresponding one-parametric family (i.e. a curve) of matrices. This motivates the following slightly more abstract definition of a moving frame:  a \emph{ moving frame in a vector space $W$} is a curve  $E(t)$ of bijective linear operators on $W$, i.e. a curve in $GL(W)$. In the same way if $G$ is a Lie subgroup of $GL(W)$, then the $G$-valued moving frame is a curve in $G$. The structure function of $E(t)$ is defined by equation \eqref{mf25}.

In order to relate this point of view to the previously defined notion of a moving frames in $\mathbb R^n$  it is just enough to choose some basis in $W$. This identifies $W$ with $\mathbb R^n$ and the operators of the frame with the matrices with respect to this basis. The one-parametric family of bases in $W$ can be obtained by taking the images of the chosen basis under the operators of the moving frame. 

Since the structure functions of the moving frame are defined via the Maurer-Cartan, which is defined on any Lie group,  we can go further and give the following definition of a moving frame in a homogeneous space of an abstract Lie group without any relation to the initial naive notion of the moving frame as a curve of bases: 
\begin{definition}
A moving frame  over a curve $\gamma$ in a homogeneous space $G/G^0$ is  a \emph{smooth lift} of $\gamma$ to the Lie group $G$, i.e. a smooth curve $\Gamma$ in $G$ such  that  $\pi\bigl(\Gamma(t)\bigr)=\gamma(t)$ for every $t$, where $\pi:G\rightarrow G/G^0$ is the canonical projection. The structure function of the moving frame $\Gamma(t)$ is the $\mathfrak g$-valued function
\begin{equation}
\label{structdef}
C_\Gamma(t):=\Omega_{\Gamma(t)}(\Gamma'(t))
\end{equation}
where $\Omega$ is the left Maurer -Cartan form on the Lie group $G$.
\end{definition}
 This definition can be related to the previous one if one choose a faithful representation of $G$.
 
 Note that in the case of curves in a flag variety $G/G^0$  the set of  moving frames over this curve can be naturally  related  to the set of moving frames adapted to this curve of flags  in the sense of the previous subsection.

For completeness, let us adjust this more general point of view on moving frames for curves in homogeneous spaces to  the  case of curves in Euclidean space.
Here a standard representation of the affine group, although one can easily manage without any representation.
For this identify $\mathbb R^n$ with the affine subspace of $\mathbb R^{n+1}$ by identifying a point $(x_1, \ldots, x_n)\in \mathbb  R^n$ with the point $(1, x_1, \ldots x_n)\in\mathbb R^{n+1}$ and the group of rigid motions $\mathrm{AO}_n$  with the subgroup of $\mathrm{GL}_{n+1}$, consisting of the matrices of the form 

\begin{equation}
\label{affine}
\left(\begin{array}{c|c}
1&0\\
\hline\\
a& U
\end{array}
\right),
\end{equation}
where $a\in \mathbb R^n$ and $U\in \mathrm{O}_n$. Here the matrix \eqref{affine} corresponds to the rigid motion sending $x\in \mathbb R^n$ to $a+Ux$,  because this matrix sends  the  vector $\begin{pmatrix} 1\\ x\end{pmatrix}$, $x\in \mathbb R^n$  to $\begin{pmatrix}1\\ a+Ux \end{pmatrix}$. To a curve $\gamma$ we assign the moving frame, given by the following curve $\Gamma(t)$ in $\mathrm{AO}_n\subset \mathrm {GL}_{n+1}$   
\begin{equation}
\label{frameaff}
\Gamma(t)=\begin{pmatrix} 1&0&\ldots& 0\\ \gamma(t)& e_1(t)&\ldots&e_n(t)\end{pmatrix},
\end{equation}
where $\bigl(e_1(t),\ldots, e_n(t)\bigr)$ is the Frenet-Serret frame of $\gamma(t)$. Since by constructions $\gamma'(t)=e_1(t)$, the structure function of this frame is 
$$\left(\begin{array}{c|c}
0&\begin{array}{ccc}0&\ldots& 0\end{array}\\
\hline\\
\begin{array}{c}1\\0\\ \vdots\\0\end{array}& R(t)
\end{array}
\right),$$
where $R(t)$ is the structure function of the Frenet-Serret frame of $\gamma$.
In contrast to the Frenet-Serret frame, the frame \eqref{frameaff} takes values in  the entire  group of rigid motions that defines the considered equivalence relation. Hence, equivalence of curves with the same structure functions of their $\mathrm{AO}_n$-valued frames, as given in \eqref{frameaff}, is obtained immediately  from Lemma \ref{fundlem}, while using the Frenet-Serret frame, Lemma \ref{fundlem} gives that the curve of velocities  are equivalent by an orthogonal transformation and the equivalence of the original curves up to a rigid motion is obtained after integration only. Although  technically these two arguments are equally elementary, the use of frames, which  take values in the entire group defining the equivalence relation (or, equivalently, in the entire group of the homogeneous space) has an obvious conceptual advantage.

The moving frame  $\Gamma(t)$ from \eqref{frameaff} can be seen as the  
\emph{canonical lift} of the original curve $\gamma$ from $\mathbb R^n=\mathrm{AO}_n/O_n$   to the group $\mathrm{AO}_n $. In  view of Lemma \ref{fundlem}, the construction of such a canonical lift or a bundle of such lifts is the main idea for solving such type of equivalence problems. Canonical means that two curves $\gamma(t)$ and $\widetilde\gamma(t)$ in $G/G^0$ are equivalent via $g\in G$, i.e. $g.\gamma(t)=\widetilde \gamma(t)$ if and only if $g$ sends any canonical lift of $\gamma(t)$ to a canonical lift of $\tilde\gamma(t)$.  
	

\subsection{Some general ideas on canonical bundles of moving frames: symmetries and normalization conditions}
A way to choose  the canonical moving frame or, equivalently, the canonical lift to the group $G$, is to specify  certain restrictions, called \emph{normalization conditions}, on its structure function 
In the case of curves in  Euclidean space the necessity of such specification does not really emerge, because it follows automatically from the condition on the moving frame to be adapted to the osculating flag.  Besides, in this case such a frame is unique. 

For curves in  general homogeneous spaces $G/G^0$  one cannot expect that there exists a unique canonical lift to the corresponding Lie group. The reason for this is that a curve $\gamma(t)$ in $G/G^0$ may have a nontrivial \emph{non-effective} symmetry, i.e. an element $s$ of $G$, which is not the identity, but  preserves each point of $\gamma$,  $s.\gamma(t)=\gamma(t)$ for every $t$. The group of non-effective  symmetries will be denoted by $\mathrm{Sym}^{\mathrm{ne}}_\gamma$.  This group of symmetries  is relevant to geometry of parametrized  curves. In the case of unparametrized curves they should be replaced by a larger subgroup of $G$ consisting of symmetries preserving a distinguished point of $\gamma$ only and not necessary other points of $\gamma$.



If $s\in \mathrm{Sym}^{\mathrm{ne}}_\gamma$ and $\Gamma(t)$ is a lift of $\gamma(t)$  then $s \Gamma(t)$ is a lift of $\gamma(t)$ as well and there is not any preference of choosing $\Gamma(t)$ over  $s \Gamma(t)$ and vice versa. In particular, they have the same structure functions. In other words the group $\mathrm{Sym}^{\mathrm{ne}}_\gamma$ encodes the minimal possible freedom of choice of a canonical frame for the curve $\gamma(t)$, so that if a normalization condition on the structure function is chosen, the set of all lifts satisfying this condition forms a fiber bundle over the curve $\gamma$ with the fibers of dimension not smaller than $\dim \mathrm{Sym}^{\mathrm{ne}}_\gamma$ and which is foliated by the lifts. This bundle will be referred as the \emph{canonical bundle} corresponding to the chosen normalization conditions.

 The reason why we do not insist that such bundle will have fibers of dimension exactly equal to $\dim \mathrm{Sym^{ne}}_\gamma$ is that  different curves in $G/G^0$ may have non-effective symmetry groups of different dimension. For example,  generic curves may not have any nontrivial non-effective symmetry. So, if we insist to assign to each curve the bundle of moving frames of the minimal possible dimension, we may have too much branching in this construction with different normalization conditions for each branch. Instead, it is preferable to choose the widest possible classes of curves so that within each class the uniform normalization conditions for moving frames are used and so that for some distinguished curves in this class the symmetry group has the maximal possible dimension within the class , i.e. for these curves the canonical bundle is of the smallest possible dimension.
 The natural candidates for such curves are orbits of one- parametric subgroups of $G$ and in the case of Grassmannians and flag varieties these one-parametric subgroups are generated by nilpotent elements of the Lie algebra of $G$.

 \section{Algebraic theory of curves in Grassmannians and flag varieties} 
 
  It turns out that for curves in Grassmannians and, more generally,  in flag varieties all main steps in the construction of canonical moving frames, including the choice of the class of curves, the description of maximal group of symmetries and of the normalization conditions can be maid purely algebraically.  In this section we we will describe this algebraic theory.
 
 \subsection{Tangent spaces to  Grassmannians}
 \label{tansec}
 Here we discuss this topic from the point of view of homogeneous spaces.   
 The tangent space to a Lie group $G$ at a point $a$ can be identified with its  Lie algebra $\mathfrak{g}$ via the Maurer-Cartan form as in \eqref{MC}. This immediately implies that the tangent space to a homogeneous space $G/G^0$ at a point $o$ can be identified with the quotient space $\mathfrak{g}/\mathfrak{g}^0$, 
 \begin{equation}
 \label{tanhom}
 T_o(G/G^0)\cong \mathfrak g/\mathfrak g^0
 \end{equation}
 where $\mathfrak g$ and $\mathfrak g^0$ are the Lie algebras of $G$ and $G^0$, respectively. 
 
 Consider the case of the Grassmannian $\mathrm {Gr}_k(V)$ of $k$-dimensional subspaces in a vector space $V$. Fix a point $\Lambda\in \mathrm {Gr}_k(V)$. 
 As already mentioned in subsection \ref{liftsec}, $\mathrm {Gr}_k(V)$ can be identified with $GL(V)/G^0$, where $G^0$ is the subgroup of $GL(V)$, preserving the subspace $\Lambda$.
The Lie algebra $\mathfrak{gl}(V)$  of $\mathrm{GL}(V)$ is the algebra  
of all endomorphisms of $V$ and the Lie algebra $\mathfrak g^0$ of $G^0$ is the algebra of all endomorphisms of $V$, preserving $\Lambda$. It is easy to see that the quotient space $\mathfrak{gl} (V)/\mathfrak g^0$ can be canonically identified with the space $\mathrm{Hom}(\Lambda, V/\Lambda)$. Indeed, let  $p:V\rightarrow V/\Lambda$ be the canonical projection to the quotient space.  The assignment
\begin{equation}
\label{quotientid}
 A\in  \mathfrak{gl} (V)\mapsto (p\circ A)|_\Lambda\in \mathrm{Hom}(\Lambda, V/\Lambda)
\end{equation} 	
maps the endomorphisms from the same coset in   $\mathfrak{gl} (V)/\mathfrak g^0$  to the same element of   $\mathrm{Hom}(\Lambda, V/\Lambda)$ and is onto.		
Therefore  by \eqref{tanhom}
 \begin{equation}		
 \label{tangrass}
 T_\Lambda \mathrm{Gr}_k(V)\cong \mathrm{Hom}(\Lambda, V/\Lambda).
\end{equation} 
This identification can be described also as follows:
Take  $Y\in T_\Lambda \mathrm{Gr}_k(V)$ and let $\Lambda(t)$ be a curve in $\mathrm{Gr}_k(V)$ such that $\Lambda(0)=\Lambda$ and $\Lambda'(0)=Y$. 	  
 Given $l\in \Lambda$ take a smooth curve of vectors $\ell(t)$ satisfying the following two properties:
 \begin{enumerate}
 	\item $\ell(0)=l$,
 	\item $\ell(t)\in\Lambda(t)$ for every  $t$ close to $0$.
 \end{enumerate}
 
 \begin{exer} Show that the coset of $\ell'(t)$
 in $V/\Lambda$ is independent of the choice of the curve $\ell$ satisfying the properties (1) and (2) above. 
\end{exer}

Based on the previous exercise to $Y$ we can assign the element of
 ${\rm
 	Hom}\bigl(\Lambda(t), W/\Lambda(t)\bigr)$ that sends $l\in \Lambda$ to the coset of $\ell'(0)$
 in $V/\Lambda$, where the curve $\ell$ satisfies properties (1) and (2) above.
 
 Now assume that $V$ is $2m$-dimensional and is endowed with a symplectic form $\sigma$. Describe the identification analogous to \eqref{tangrass} for the Lagrange Grassmannian $L(V)$. Take $\Lambda\in L(V)$. Since $L(V)\subset \mathrm{Gr}_m(V)$, the space $T_\Lambda L(V)$ can be  identified with a subspace of $\mathrm{Hom}(\Lambda, V/\Lambda)$.
 To describe this subspace, first note that $\sigma$ defines the identification of $V/\Lambda$ with the dual space $\Lambda^*$: the assignment
\begin{equation}
\label{dualid}
v\in V\mapsto (i_v\sigma)|_\Lambda 
\end{equation}
maps the elements from the same coset of $V/\Lambda$ to the same element of $\Lambda^*$ and is onto. Hence, it defines the required identification. Here $i_v\sigma$ defines the interior product of the vector $v$ and the form $\sigma$, that is  $i_v\sigma (w)= \sigma(v, w)$.
Second, since $L(W)\cong \mathrm{Sp}(V)/G^0$, where $G^0$ is the subgroup of $\mathrm {Sp}(V)$ preserving the space $\Lambda$,  by \eqref{tanhom} the space $T_\Lambda L(V)$ can be identified with 
$\mathfrak{sp}(V)/\mathfrak{h}$, where $\mathfrak{sp}(V)$ is the Lie algebra of $\mathrm{Sp}(V)$ and it consists of $A\in \mathfrak{gl}(V)$ such that
\begin{equation}
\label{sympalg}
\sigma(Av, w)=\sigma(Aw, v), 
\end{equation}
i.e. the bilinear form $\sigma(A\cdot, \cdot)$ is symmetric,		 
and $\mathfrak g^0$ is the Lie algebra of $G^0$.  So, in \eqref{quotientid} we have to take $A\in \mathfrak{sp}(V)$. From  \eqref{dualid} and \eqref{sympalg} it follows that $(p\circ A)|_\Lambda$ considered as a map from $\Lambda$ to $\Lambda^*$ is self-adjoint. Besides, any self-adjoint map from $\Lambda$ to $\Lambda^*$ can be obtained in this way from some $A\in \mathfrak{sp}(V)$. The space of self-adjoint maps from $\Lambda$ to $\Lambda^*$  can be identified with the space $\mathrm{Quad}(\Lambda)$ on $\Lambda$,
\begin{equation}		
\label{tanlg}
T_\Lambda L(V)\cong \mathrm{Quad}(\Lambda).
\end{equation}
Similarly to the case of the Grassmannian, if   $Y\in T_\Lambda L(V)$, $\Lambda(t)$ is  a curve in $L(V)$ such that $\Lambda(0)=\Lambda$ and $\Lambda'(0)=Y$, and  a smooth curve of vectors $\ell(t)$ satisfies the properties (1) and (2) above, then the quadratic form on $\Lambda$ corresponding to $Y$ is the form sending $l$ to $\sigma(\ell'(0), l)$.

We say that a curve $\Lambda(t)$ in Lagrange Grassmannians is \emph{monotonically nondecreasing} if it velocity $\cfrac{d}{dt} \Lambda(t)$ is non-negative quadratic form for every $t$.
   
\subsection{Osculating flags and symbols of  curves in Grassmannians}
\label{oscflagsec}
The goal of this subsection is to distinguish the classes of curves in Grassmannians for which the uniform construction of canonical moving frames can be made.   
For this, first  to a curve $\Lambda(t)$ in the Grassmannian $\mathrm{Gr}_k(V)$ we assign a special curve of flags, called the \emph{curve of osculating flags}. Denote by $C(\Lambda)$ the canonical bundle over the curve $\Lambda$: The fiber of $C(\Lambda)$ over the point $\Lambda(t)$ is the vector space $\Lambda(t)$. Let $\Gamma(\Lambda)$ be the space of all sections of $C(\Lambda)$ . Set
$\Lambda^{(0)}(t):=\Lambda(t)$ and  define inductively
$$\Lambda^{(-j)}(t)={\rm span}\,\left\{\frac{d^k}{d t^k}\ell(t): \ell\in\Gamma(\Lambda), 0\leq k\leq j\right\}$$ for $j\geq0$.
The space $\Lambda^{(-j)}(t)$, $j>0$, is called the \emph{$j$th extension} or the \emph{$j$th osculating subspace} of the  curve $\Lambda$ at point $t$. The usage of  negative indices here is in fact natural because, as we will see later, it is in accordance with the order of invariants of the curve  (i.e. the order of jet of a curve on which an invariant depends) and also with the  natural filtration of the algebra of infinitesimal symmetries, given by stabilizers of jets of a curve of each order (and indexed by this jet order).

Further, given a subspace $L$ in $V$ 
denote by $L^\perp$ the annihilator of $L$ in the dual space $V^*$:
$$L^\perp=\{p\in V^*:p(v)=0,\,\forall \,v\in L\}.$$ 
Set
\begin{equation}
\label{contra}
\Lambda^{(j)}(t)=\Bigl(\bigl(\Lambda(t)^\perp\bigr)^{(-j)}\Bigr)^\perp, \quad j\geq 0.
\end{equation}
The subspace $\Lambda^{(j)}(t)$, $j>0$, is called the \emph{$j$th contraction} of the  curve $\Lambda$ at point $t$.
Clearly, $\Lambda^{(j)}(t)\subseteq\Lambda^{(j-1)}(t)$. The flag (the filtration)
\begin{equation}
\label{filt}
\ldots\Lambda^{(2)}(t)\subseteq\Lambda^{(1)}(t)\subseteq\Lambda^{(0)}\subseteq \Lambda^{(-1)}(t)\subseteq \Lambda^{(-2)}(t)\subseteq\ldots
\end{equation} 
is called the \emph{osculating flag (filtration) of the curve $\Lambda$ at point $t$.} By construction, the curves in Grassmannians (Lagrangian Grassmannian) are $\mathrm{GL}(V)$-equivalent ($\mathrm {Sp}(V)$-equivalent) if and only if the curves of their osculating flags  are $\mathrm{GL}(V)$-equivalent ($\mathrm {Sp}(V)$ equivalent). 

 A flag $\{X^j\}_{j\in\mathbb Z}$ in a symplectic space $X$ with $X^j\subset X^{j-1}$ will be called \emph{symplectic} if all subspaces $X^j$ with $j>0$  are coisotropic and $X^{-j}=(X^j)^\angle$. Then all subspaces $X^j$ with $j<0$ are isotropic. If $\Lambda(t)$ is a curve in a Lagrangian Grassmannian, then the flag \eqref{filt} is symplectic. Indeed, as $\Lambda(t)\subset \Lambda^{(j)}(t)$ for $j<0$ and $\Lambda(t)$ is Lagrangian, we have that $$\Lambda^{(-j)}=\Lambda^{(j)}(t)^\angle\subset \Lambda(t)\subset \Lambda^{(j)}(t),$$ which implies that the subspaces $\Lambda(t)\subset \Lambda^{(j)}(t)$ are isotropic.
Further, from \eqref{dualid} and \eqref{contra} it follows that   
	\begin{equation}
	\label{skewsymm}
	\Lambda^{(j)}(t)=\Bigl(\Lambda^{(-j)}(t)\Bigr)^\angle.
	\end{equation}

The curve $\Lambda(t)$ is called \emph{equiregular} if for every $j>0$ the dimension of $\Lambda^{(j)}(t)$ is constant.  As the integer-valued function $\dim \Lambda^{(j)}(t)$ is lower semi-continuous, it is locally constant on an open dense set of $I$, which will imply that for a generic $t$ the curve $\Lambda$ is equiregular in a neighborhood of $t$. So, from now on we will assume that the curve $\Lambda(t)$ is equiregular. For an equiregular curve  passing to the osculating flag, we get a curve in a flag variety, i.e. the equivalence of curves in Grassmannians (Lagrangian Grassmannians) is reduced to the equivalence of the osculating curves in the corresponding flag varieties.


\begin{rem}
	For an equiregular curve $\Lambda(t)$ the subspaces $\Lambda^{(j)}(t)$ can be described by means of the identification of tangent vectors to Grassmannians with certain linear maps as in subsection \ref{tansec}. Namely, $\Lambda^{(j-1)}(t)$ is the preimage under the canonical projection from $V$ to $V/\Lambda^{(j)}(t)$ of $\mathrm {Im} \frac{d}{dt} \Lambda^{(j)}(t)$. For an equiregular curve $\Lambda(t)$ we also have 
	$$\Lambda ^{(j)}(t)=\mathrm {Ker}\frac{d}{dt}\Lambda ^{(j-1)}(t), \quad j>0$$
\end{rem}

\begin{exer}
Prove that if the curve $\Lambda$ is equiregular, then  
$$\dim\Lambda^{(j-1)}(t)-\dim \Lambda^{(j)}(t)\leq \dim
\Lambda^{(j)}(t)-\dim \Lambda^{(j+1)}(t), j<0.$$
\end{exer} 
Recall that a Young diagram is  a finite collection of boxes,  arranged in columns (equivalently, rows) with the column (rows) lengths in non-increasing order , aligned  from the top and the left, as shown in the example below:

\begin{equation*}
\label{youngodd}
\begin{ytableau}
~&~&~ & ~&~\\
~&~&~ &\\
~&~&\\
~&
\end{ytableau} 
\end{equation*}
\begin{definition}
\label{Youngdef}
The Young diagram $D$ such that  the number of boxes in the
$-j$th column, $j<0$, of $D$ is equal to $\dim \Lambda^{(j)}-\dim
\Lambda^{(j+1)}$ is called  
the Young diagram $D$ of the curve $\Lambda(t)$ in Grassmannian
\end{definition}	
This notion is especially important for monotonic curves in Lagrangian Grassmannians as shown in Proposition \ref{Youngprop}.

\begin{example}
\label{regexample}	
	 The curve in Lagrangian Grassmannian $\Lambda$ is called regular if $\Lambda^{(-1)}=V$. In this case the Young diagram consists of one row with the number of boxes equal to $\cfrac{1}{2}\dim V$. $\Box$.
\end{example} 
Let 
\begin{equation}
\label{gradj}
V_j(t):=\Lambda^{(j)}(t)/\Lambda^{(j+1)}(t)
\end{equation}
and 
\begin{equation}
\label{grad}
\mathrm {gr}V(t):=\bigoplus_{j\in\mathbb Z}V_j(t)
\end{equation}
be the graded space, associated with the filtration \eqref{filt}.
By constructions, for any $j\in\mathbb Z$ we have the following inclusion 
\begin{equation}
\label{compatosc}
(\Lambda^{(j)})^{(-1)}(t)\subseteq\Lambda^{(j-1)}(t).
\end{equation}

Hence, the velocity $\frac{d}{dt}\Lambda ^{(j)}(t)$ of the curve $\Lambda^{(j)}$ at $t$, which is the map from $\Lambda^{(j)}(t)$ to $V/\Lambda^{(j)}(t)$ factors through the map from $V_j(t)$ to $V_{j-1}(t)$. Thus, the velocity of the curve of osculating flags \eqref{filt} at $t$ factors through the endomorphism $\delta_t$ of the graded space $\mathrm {gr} V(t)$, sending  $V_j(t)$ to $V_{j-1}(t)$ for any $j\in \mathbb Z$, i.e. the degree $-1$ endomorphism of the graded space $\mathrm {gr}V(t)$. This endomorphism is called the \emph{symbol} of the curve $\Lambda$ at $t$ in the Grassmannian $Gr_k(V)$.

\begin{rem}
\label{surinrem}	
By constructions $\delta_t:V_j(t)\rightarrow V_{j-1}(t)$ is injective for $j\geq 0$ and surjective for $j\leq 0$.
\end{rem}
 
The natural equivalence relation on the space of endomorphisms of graded vector spaces is defined via conjugation: two endomorphisms $\delta$ and $\tilde\delta$ acting on graded vector spaces $X=\displaystyle{\bigoplus_{j\in \mathbb Z}} X_j$ and $\widetilde X=\displaystyle{\bigoplus_{j\in \mathbb Z}} \tilde X_j$, respectively, are called equivalent, if there exists an isomorphism $Q:X\rightarrow \tilde X$, preserving the grading, i.e. such that $Q(X_j)=\widetilde X_j$, and conjugating $\delta$ with $\tilde\delta$, i.e. such that 
\begin{equation}
\label{conj}
Q\delta=\tilde\delta Q. 
\end{equation}
So, it is in fact more correct to call the symbol of the curve $\Lambda$ at $t$ the equivalence class of $\delta_t$  in the set of degree $-1$ endomorphisms of graded spaces instead of a single degree $-1$ endomorphisms $\delta_t$. Also note that if $\tilde X=X$, then $\delta$ and $\tilde \delta$ are equivalent if and only if they lie in the same orbit under adjoint action of the isomorphisms of $X$ preserving the grading on $\mathfrak {gl}(X)$.

In the case of a curve in Lagrangian Grassmannians $L(V)$ the symplectic form $\sigma$ on $V$ induces the symplectic form $\sigma_t$ on each graded space $\mathrm{gr}V(t)$ as follows: if $\bar x \in V_j(t)$
and $\bar y\in V_{\tilde j}(t)$ with $j+\tilde j=1$, then
$\sigma_t(\bar x,\bar y):=w(x,y)$, where $x$ and $y$ are
representatives of $\bar x$ and $\bar y$ in $\Lambda^{(j)}(t)$ and
$\Lambda^{(\tilde j)}(t)$ respectively; if $j+\tilde j\neq1$, then
$\sigma_t(\bar x,\bar y)=0$. From \eqref{rMC} below it will follow that the symbol $\delta_t$ is not only an endomorphism of $\mathrm gr V(t)$ but also an element of the symplectic algebra $\mathfrak{sp} \bigl(\mathrm{gr}\,V(t)\bigr)$.

We say that a graded space $X=\displaystyle{\bigoplus_{j\in \mathbb Z} X_j}$ with a symplectic form $\sigma$ is a 
\emph{symplectic graded space}, if the flag $\{ X^j\}_{j\in \mathbb Z}$, where $X^j:=\displaystyle{\bigoplus_{i\geq j} X_i}$,  
is a  symplectic flag and after the identification of $X^j/X^{j+1}$ with $X_j$ the symplectic form  induced by $\sigma$ on the graded space $X=\displaystyle{\bigoplus_{j\in \mathbb Z} X^j/X^{j-1}}$ coincides with $\sigma$. Equivalently,  it means that the spaces $X_j\oplus X_{\bar j}$ with $j+\bar j\neq 1$ are isotropic with respect to $\sigma$. To define the equivalence relation on the space of endomorphisms of symplectic graded spaces one has to require that the conjugating isomorphism $Q$, as in \eqref{conj}, preserves the symplectic form. 

Note that the notion of symbol as above can be defined not only for a curve in a Grassmannian (a Lagrangian Grassmannian) via its curve of osculating flags but for any curve of flags (symplectic flags) $\{\Lambda^j(t)\}_{j\in \mathbb Z}$ such that 
\begin{equation}
\label{compat}
\Lambda^j(t)\subseteq\Lambda^{j-1}(t), \quad  (\Lambda^j)^{(-1)}(t)\subseteq \Lambda^{j-1}(t). 
\end{equation}
For this, spaces $\Lambda^{(j)}(t)$ in all previous formulas should be replaced by $\Lambda^j(t)$.   So, the subsequent theory will be developed to the more general case of such curves of flags, which will be called \emph{curves of flags compatible with osculation}.

Fix a flag (a symplectic flag) $\{V^j\}_{j\in Z}$ with $V^{j+1}\subset V^j$ in $V$ and the grading (symplectic grading) of $V$, 
\begin{equation} 
\label{gradV}
V=\displaystyle{\bigoplus_{j\in \mathbb Z}} V_j
\end{equation}
 such that $V^j=V^{j+1}\oplus V_j$. Recall that an endomorphism $A$ of the  graded space $V$ is of degree $k$ if 
$$A(V_j)\subset V_{j+k}.$$
Further, let $G$ denote either $\mathrm{GL}(V)$ or $\mathrm{Sp}(V)$ and by $G^0$ the subgroup of $G$ preserving the chosen flag $\{V^j\}_{j\in Z}$.  The grading on $V$ induces the grading on the Lie algebra $\mathfrak g$: 
\begin{equation}
\label{splitg}
\mathfrak g=\displaystyle{\bigoplus_{k\in\mathbb Z}} \mathfrak g_k, 
\end{equation}
where 
$\mathfrak g_k$ is the space of degree $k$ endomorphism of $V$, belonging to $\mathfrak g$. Given $a\in \mathfrak g$ we will denote by $a_k$ its degree $k$ component, i.e. the $\mathfrak g_k$-component of $A$ with respect to the splitting \eqref{splitg}.

If $\Gamma(t)$ is a smooth lift of  curve of flags $\{\Lambda^j(t)\}_{j\in \mathbb Z}$  compatible with osculation from $G/G^0$ to $G$, then by constructions 
\begin{equation}
\label{rMC}
\delta_t:=\bigl(\Gamma'(t) \Gamma(t)^{-1})_{-1}.
\end{equation}

\begin{definition}
\label{constdef}	 
We say that a curve of flags (symplectic flags)  $\{\Lambda^j(t)\}_{j\in\mathbb Z}$ compatible with osculation is of \emph {constant type}  if for all $t$ symbols $\delta_t$ belong to the same equivalence class. If $\delta$ is a degree $-1$ element of a graded space (a symplectic graded space), then we say that a curve of flags $\{\Lambda^j(t)\}_{j\in{\mathbb Z}}$  compatible with osculation is of \emph {constant type $\delta$}  if for all $t$ the symbol $\delta_t$ is equivalent to $\delta$.  
\end{definition}
\begin{lem}
	\label{constlem}
	The curve of flags $\{\Lambda^j(t)\}_{j\in\mathbb Z}$  is of constant type $\delta$  if and only if for any smooth lift $\Gamma(t)$ of the curve the degree $-1$ component of the structure function $C_{\Gamma} $ of $\Gamma$ lies in the orbit of $\delta$ under the adjoint action of $G^0$ on $\mathfrak g$. 
\end{lem}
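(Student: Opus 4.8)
The plan is to first relate the intrinsic symbol $\delta_t$ to the structure function of an \emph{arbitrary} lift, and then to translate the resulting statement about symbols into the orbit condition on $\mathfrak g_{-1}$.

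\emph{Step 1: the symbol is the degree $-1$ part of the structure function, read through the lift.} A smooth lift $\Gamma$ satisfies $\Lambda^j(t)=\Gamma(t)V^j$, so $\Gamma(t)$ carries the fixed (symplectic) flag $\{V^j\}$ onto $\{\Lambda^j(t)\}$ and therefore induces a grading-preserving isomorphism $\mathrm{gr}_{\Gamma(t)}\colon \mathrm{gr}V\to\mathrm{gr}V(t)$, which is moreover symplectic when $G=\mathrm{Sp}(V)$. Using the description of tangent vectors in subsection \ref{tansec}, the velocity of $\Lambda^j$ is computed from the curve $\ell(t)=\Gamma(t)w$, $w\in V^j$, giving $\ell'(t)=(\Gamma'(t)\Gamma(t)^{-1})\ell(t)$; compatibility with osculation, $(\Lambda^j)^{(-1)}\subseteq\Lambda^{j-1}$, then shows that $\Gamma'\Gamma^{-1}$ maps $\Lambda^j(t)$ into $\Lambda^{j-1}(t)$, and its induced degree $-1$ endomorphism of $\mathrm{gr}V(t)$ is exactly $\delta_t$, as in \eqref{rMC}. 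Conjugating, $C_\Gamma=\Gamma^{-1}(\Gamma'\Gamma^{-1})\Gamma$ maps $V^j=\Gamma^{-1}\Lambda^j(t)$ into $V^{j-1}$, so $C_\Gamma\in\bigoplus_{k\geq -1}\mathfrak g_k$ and its induced degree $-1$ map on $\mathrm{gr}V$ is $(C_\Gamma)_{-1}$. Since conjugation by $\Gamma$ intertwines the two induced maps, I obtain $\delta_t\circ\mathrm{gr}_{\Gamma(t)}=\mathrm{gr}_{\Gamma(t)}\circ (C_\Gamma(t))_{-1}$, i.e. $\delta_t$ and $(C_\Gamma(t))_{-1}$ are equivalent symbols for every $t$ and every lift.

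\emph{Step 2: reduction to an orbit condition.} Because the symbol is intrinsic to the curve and, by the previous step, always equivalent to $(C_\Gamma)_{-1}$, the curve is of constant type $\delta$ if and only if $(C_\Gamma(t))_{-1}$ is equivalent to $\delta$ for every $t$. Both are degree $-1$ endomorphisms of the \emph{same} graded (symplectic) space $\mathrm{gr}V$, and as noted after \eqref{conj} equivalence of two such endomorphisms means exactly that they lie in one orbit of the adjoint action of the degree-$0$ subgroup $G_0\subset G$ of grading-preserving (and, in the symplectic case, symplectic) automorphisms. It then remains to replace $G_0$ by $G^0$. Using the Levi decomposition $G^0=G_0\ltimes U$ with unipotent radical $U=\exp\bigl(\bigoplus_{k\geq 1}\mathfrak g_k\bigr)$, for $g=g_0u$ and any $\delta\in\mathfrak g_{-1}$ one has $\Ad_u\delta=\delta+R$ with $R\in\bigoplus_{k\geq 0}\mathfrak g_k$, since each bracket with $\log u\in\bigoplus_{k\geq 1}\mathfrak g_k$ raises degree; as $\Ad_{g_0}$ preserves degrees, $(\Ad_g\delta)_{-1}=\Ad_{g_0}\delta$. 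Hence an element of $\mathfrak g_{-1}$, such as $(C_\Gamma)_{-1}$, lies in the $G^0$-orbit of $\delta$ if and only if it lies in the $G_0$-orbit of $\delta$; combined with the reformulation above, this is precisely the statement of the lemma.

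The main obstacle is Step 1: matching the \emph{intrinsically} defined symbol $\delta_t$, an endomorphism of the moving graded space $\mathrm{gr}V(t)$, with the degree $-1$ part of the structure function, an element of the fixed $\mathfrak g_{-1}$, through the isomorphism induced by the lift, and checking that this isomorphism is symplectic in the $\mathrm{Sp}(V)$ case, so that the notion of equivalence of symbols is the correct one on both sides. The algebraic bridge in Step 2 is routine once the parabolic/Levi structure of $G^0$ is invoked; its only delicate point is to project onto degree $-1$ and exploit that the unipotent factor moves $\delta$ only into nonnegative degrees.
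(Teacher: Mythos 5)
Your proof is correct and takes essentially the same route as the paper: your Step 1 is a careful unpacking of the paper's one-line identity \eqref{MCAs}, which conjugates the intrinsically defined symbol $\delta_t$ of \eqref{rMC} into $(C_\Gamma(t))_{-1}$ via the lift, and your Step 2 supplies the justification (via the Levi decomposition $G^0=G_0\ltimes U$ and the fact that $\mathrm{Ad}$ of the unipotent part moves $\delta$ only into nonnegative degrees) for the paper's concluding, unargued sentence that equivalence of symbols is the same as the $G^0$-orbit condition. The only difference is that you spell out details the paper leaves implicit.
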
	
	\begin{proof} Indeed, from  \eqref{rMC} it follows that 
\begin{equation}
\label{MCAs}
\Bigl(C_\Gamma(t)\Bigr)_{-1}=\Bigl(\Omega_{\Gamma(t)} \bigl(\Gamma'(t)\bigr)\Bigr)_{-1}=\Bigl(\Gamma(t)^{-1} \Gamma'(t)\Bigr)_{-1}=\Bigl(\Gamma(t)^{-1} \delta_t \Gamma(t)\Bigr)_{-1}.
\end{equation}
Therefore,  $\delta_t$ is equivalent to $\delta$ if and only if
$\Bigl(C_\Gamma(t)\Bigr)_{-1}$ lies in the orbit of $\delta$ under the adjoint action of $G^0$ on $\mathfrak g$. 
\end{proof}

The set of all equivalence classes of degree $-1$ endomorphisms of a graded space (a symplectic graded space) is finite and all equivalence classes in this case are explicitly described in \cite{flag2}. The finiteness of these equivalence classes implies that a curve of flags (symplectic flags) compatible with osculation   is of constant type in a neighborhood of its generic point. 
Moreover, as it will be shown for completeness later, any equiregular monotonically nondecreasing curve in a Lagrangian Grassmannian is of constant type; the space of equivalence classes of symbols of such curves is in fact in one-to-one correspondence with the tuples $\{\dim\Lambda^{(j)}\} _{j\leq 0}$ or , equivalently, with the set of all Young diagrams (see Proposition \ref{Youngprop} below). Note also that the finiteness of the set of equivalence classes follows in fact from more general result of  E.B. Vinberg \cite{vinb} on finiteness of orbits of degree $-1$ elements of a graded semisimple Lie algebra under the adjoint action of the group of automorphisms of this graded Lie algebra.

\subsection{Flat curves of constant type and their symmetries}
Fix $\delta\in \mathfrak g_{-1}$. 
The ultimate goal of the entire section is to describe  the unified construction of canonical bundle of moving frames for
 all curves of flags (symplectic flags) of constant type $\delta$.

According to the general discussions at the end of the previous section, first we need to distinguish the most symmetric curves within this class. The natural candidate is an orbit under the action of the 
one-parametric group $\mathrm{exp}(t\delta)$  on the corresponding flag variety, for example, the curve of flags $t\rightarrow \{\mathrm{exp}(t\delta) V^j\}_{j\in\mathbb Z}$. Such curve is called the \emph{flat curve} of constant type $\delta$ and will be denoted by $F(\delta)$. As we will see later, this curve is indeed the right candidate for the most symmetric curve in the considered class. The Lie algebra of the symmetry group of $F_\delta$  has an explicit algebraic description.
Since for a parametrized curves the group of non-effective symmetries are important, in this subsection we will focus on the corresponding Lie algebra, referred as the algebra of infinitesimal non-effective symmetries. 

Let
\begin{equation}
\label{0prolong}
\mathfrak u_0(\delta):=\{x\in\mathfrak g_0:[x,\delta]=0\}
\end{equation}
and define recursively
\begin{equation}
\label{kprolong}
\mathfrak u_k(\delta):=\{x\in\mathfrak g_k:[x,\delta]\in \mathfrak u_{k-1}(\delta)\},\quad
k> 0.
\end{equation}

\begin{exer}
Show that 	
\begin{equation}
\label{Tanprol}
\mathfrak u(\delta):=\displaystyle{\bigoplus_{k\geq 0}} \mathfrak u_k(\delta)
\end{equation}
 is a subalgebra of $\mathfrak g$. 
\end{exer}

 The algebra $\mathfrak u(\delta)$ is called the \emph{universal algebraic prolongation of the symbol $\delta$}, and its degree $k$ homogeneous component  $\mathfrak u_k(\delta)$ is called the \emph{$k$th algebraic prolongation of the symbol $\delta$}. 
\begin{rem}
	\label{u0rem}
	Let 
	\begin{eqnarray}
   &~&U_0(\delta)=\{A\in G: \mathrm{Ad}\, A\,\delta=\delta \text{ and }  A(V_j)=V_j \, \forall j\} \label {u_0}\\
	&~&U^0(\delta) =\{A\in G: \bigl(\mathrm{Ad}\, A\,\delta\bigr)_{-1}=\delta \text{ and }  A(V^j)=V^j \,\, \forall j\} \label {u^0}
	\end{eqnarray}
    Then the Lie algebra of $U_{0}(\delta)$ is equal to $\mathfrak u_0(\delta)$ and the Lie algebra of $U^0(\delta)$ is equal to $\mathfrak u_0\oplus\displaystyle{\bigoplus_{k>0} \mathfrak g_k}$.		
	 \end{rem} 
 
\begin{thm}
The algebra of infinitesimal non-effective symmetries $\mathrm{sym}^{\mathrm{ne}}_{F(\delta)}$ of the flat curve $F(\delta)$ of constant type $\delta$ is equal to the algebra $\mathfrak u(\delta)$.
\end{thm}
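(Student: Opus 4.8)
The plan is to characterize the infinitesimal non-effective symmetries of $F(\delta)$ directly as those elements of $\mathfrak g$ whose flows fix every flag $\{\exp(t\delta)V^j\}_{j\in\mathbb Z}$, and then to show this condition is equivalent to membership in $\mathfrak u(\delta)$. First I would recall that an element $a\in\mathfrak g$ generates an infinitesimal non-effective symmetry of $F(\delta)$ precisely when the one-parameter group $\exp(sa)$ preserves each flag of the curve, i.e. $\exp(sa)\exp(t\delta)V^j=\exp(t\delta)V^j$ for all $s,t,j$. Differentiating in $s$ at $s=0$, this says $\bigl(\Ad\exp(t\delta)^{-1}\,a\bigr)$ must preserve each $V^j$ for all $t$, equivalently $\Ad\exp(-t\delta)\,a\in\mathfrak g^0$ for all $t$, where $\mathfrak g^0=\bigoplus_{k\geq 0}\mathfrak g_k$ is the Lie algebra of $G^0$ (the stabilizer of the flag $\{V^j\}$).

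Next I would expand $\Ad\exp(-t\delta)\,a=\sum_{n\geq 0}\frac{(-t)^n}{n!}(\ad\delta)^n a$ and analyze when this lies in $\mathfrak g^0$ for all $t$. Since $\delta\in\mathfrak g_{-1}$ lowers degree by one, if we write $a=\sum_k a_k$ with $a_k\in\mathfrak g_k$, the nonnegative-degree condition for all $t$ forces $a$ to have no components of negative degree: taking the lowest-degree term shows $a_k=0$ for $k<0$, so $a\in\bigoplus_{k\geq 0}\mathfrak g_k$. The content then is the requirement that the negative-degree components produced by repeatedly bracketing with $\delta$ vanish. Concretely, $(\ad\delta)^n a$ drops the degree by $n$, so demanding that the degree $-1,-2,\dots$ parts of $\Ad\exp(-t\delta)a$ cancel for all $t$ amounts to a triangular system that I would solve degree by degree, and this is exactly the recursive definition: $[a_k,\delta]\in\mathfrak u_{k-1}(\delta)$ for each $k$, starting from $[a_0,\delta]=0$ in degree $0$. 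Matching the vanishing of each homogeneous negative-degree component against \eqref{0prolong} and \eqref{kprolong} should give $a\in\mathfrak u(\delta)$, and conversely any $a\in\mathfrak u(\delta)$ satisfies these cancellations and hence preserves the curve.

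To organize the bookkeeping cleanly I would use Remark \ref{u0rem}, which already identifies $\mathfrak u_0(\delta)$ as the Lie algebra of $U_0(\delta)$ and pins down the homogeneous structure; this lets me treat the degree-zero condition $[a_0,\delta]=0$ as the base case and then induct on $k$. The equivalence $\exp(sa)\exp(t\delta)V^j=\exp(t\delta)V^j$ should be reduced, via conjugation by $\exp(t\delta)$, to the statement that $\Ad\exp(-t\delta)a$ preserves the fixed flag for all $t$, which is the cleanest formulation; one must be slightly careful that preserving the flag for all $t$ (a closed condition) matches the infinitesimal/polynomial-in-$t$ statement, but since $t\mapsto\Ad\exp(-t\delta)a$ is polynomial in $t$ (as $\ad\delta$ is nilpotent on the finite-dimensional graded algebra), the condition holding for all $t$ is equivalent to the vanishing of each coefficient, which is precisely the prolongation system.

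The main obstacle, and the step deserving the most care, is the exact matching between the ``all $t$'' flag-preservation condition and the recursive prolongation conditions \eqref{0prolong}–\eqref{kprolong}. The subtlety is that $\Ad\exp(-t\delta)a\in\mathfrak g^0$ for all $t$ does not literally say each $(\ad\delta)^n a$ lies in $\mathfrak g^0$; rather it says the total negative-degree part vanishes, and disentangling this into the staggered conditions $[a_k,\delta]\in\mathfrak u_{k-1}(\delta)$ requires extracting homogeneous components correctly and verifying that the prolongation recursion is exactly the condition for all lower-degree ``leakage'' to cancel. I expect that once the polynomial $\Ad\exp(-t\delta)a$ is decomposed into homogeneous pieces and one reads off the vanishing of each negative-degree homogeneous component as a separate equation in $t$, the resulting system telescopes into \eqref{0prolong}–\eqref{kprolong}; confirming this telescoping (rather than merely that each bracket lands in $\mathfrak g^0$) is the crux.
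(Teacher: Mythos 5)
Your proposal is correct and follows essentially the same route as the paper: reduce the symmetry condition to $\mathrm{Ad}\bigl(\exp(-t\delta)\bigr)y\in\mathfrak g^0$ for all $t$, expand the (finite) Taylor series, extract the coefficient conditions $(\mathrm{ad}\,\delta)^k y\in\mathfrak g^0$, and match these degree by degree with the recursion \eqref{0prolong}--\eqref{kprolong}. The ``subtlety'' you flag at the end resolves exactly as you suspect and as the paper does it: since the map is polynomial in $t$ with values in the finite-dimensional space $\mathfrak g/\mathfrak g^0$, membership in $\mathfrak g^0$ for all $t$ does force each coefficient $(\mathrm{ad}\,\delta)^k y$ into $\mathfrak g^0$ separately, and the vanishing of the single negative-degree component $(\mathrm{ad}\,\delta)^{k+1}y_k$ is precisely $y_k\in\mathfrak u_k(\delta)$.
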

\begin{proof} By definition,  $y\in \mathrm{sym}^{\mathrm{ne}}_{F(\delta)}$ if and only if $\mathrm {exp}(s y)\in \mathrm{Sym}^{\mathrm{ne}}_{F(\delta)}$ for every $s$ sufficiently close to $0$. The latter means that
\begin{equation}
\label{sym0}
\mathrm {exp}(s y).F_\delta(t)=F_\delta(t)
\end{equation}	
 for every $s$ sufficiently close to $0$ and every $t\in \mathbb R$ or, equivalently
\begin{equation*}
\mathrm{exp}(s y)\circ \mathrm {exp}(t\delta) V^j=	\mathrm {exp}(t\delta) V^j\Leftrightarrow \mathrm {exp}(-t\delta)\circ \mathrm{exp}(sy)\circ \mathrm {exp}(t\delta) V^j=V^j
\end{equation*}
for every $s$ sufficiently close to $0$, every $t\in\mathbb R$ and $j\in \mathbb Z$, which in turn equivalently can be written as 
\begin{equation}
\label{sym1}
\mathrm {exp}(-t\delta)\circ \mathrm{exp}(sy)\circ \mathrm {exp}(t\delta)\in G^0,
\end{equation}
 where, as before, $G^0$ denotes the subgroup of $G$ consisting of all element preserving the flag $\{V^j\}_{j\in\mathbb Z}$. 
 
Differentiating \eqref{sym1} with respect to $s$ at $s=0$, we get
\begin{equation}
\label{sym2}
\mathrm {Ad}\Bigl( \mathrm {exp}(-t\delta)\Bigr) y\in \mathfrak g^0
\end{equation}
where $\mathfrak g^0$ is the Lie algebra of $G^0$. Pass to the Taylor series of the left-hand side of \eqref{sym2}, 
\begin{equation}
\label{Taylor}
\mathrm {Ad}\Bigl( \mathrm {exp}(-t\delta)\Bigr) y=\sum_{k=0}^{\infty}\cfrac{t^k}{k!}\,\,\bigl(\mathrm{ad}(-\delta)\bigr)^k y,
\end{equation}
where the sum is actually finite for every element $y$. Then
we get that \eqref{sym2} is equivalent to 
\begin{equation}
\label{sym3}
\bigl(\mathrm {ad}\,\delta\bigr)^k y\in \mathfrak g^0, \quad k\geq 0.
\end{equation}
Obviously, $\mathfrak g^0=\displaystyle{\bigoplus_{k\geq 0}} \mathfrak g_k$. So, $y$ can be represented  as $y=\displaystyle{\sum_{k\geq 0} y_k}$ with $y_k\in \mathfrak g_k$.
Since $\delta$ has degree $-1$ and all elements of $\mathfrak g^0$ are of nonnegative degree, from \eqref{sym3}
with  $k=1$ it follows that $[\delta,y_0]=0$, i.e. by \eqref{0prolong} we have  $y_0 \in\mathfrak u_0(\delta)$. Further, from \eqref{sym3}
with  $k=2$ it follows that $\mathrm{ad} \delta^2 y_1=0$, which yields that $[\delta, y_1]\in u_0(\delta)$ and so by \eqref{kprolong} we have $y_1 \in\mathfrak u_1(\delta)$. In this way by induction in $k$ one proves that \eqref{sym3} implies that $y_k\in\mathfrak u_k(\delta)$ for all $k\geq 0$, i.e. that $\mathrm{sym}^{\mathrm{ne}}_{F(\delta)}\subset \mathfrak u(\delta)$. 
Finally, the opposite inclusion is valid because \eqref{sym3} implies \eqref{sym0}.
\end{proof}
\begin{rem}
By analogy, one can describe the whole algebra of  infinitesimal symmetries $\mathrm{sym}_{F(\delta)}$ of a curve $F_\delta$ considered as an unparametrized curve (a one-dimensional submanifold of the corresponding flag variety). For this set
\begin{equation*}
\widetilde{\mathfrak u}_{-1}(\delta):=\mathrm{span}\{\delta\}
\end{equation*}
and define recursively
\begin{equation*}
\widetilde {\mathfrak u}_k(\delta):=\{x\in\mathfrak g_k:[x,\delta]\in \widetilde{\mathfrak u}_{k-1}(\delta)\},\quad
k\geq  0.
\end{equation*}
Then  $\mathrm{sym}_{F(\delta)}=\displaystyle{\bigoplus_{k\geq -1}} \widetilde{\mathfrak u}_k(\delta)$. The algebra $\mathrm{sym}_{F(\delta)}$ is in fact the maximal graded subalgebra  among all graded subalgebras of algebras with negative part equal to $\mathrm{span}\{\delta\}$. Moreover the algebra $\mathrm{sym}^{\mathrm{ne}}_{F(\delta)}$ is the largest ideal of $\mathrm{sym}_{F(\delta)}$ concentrated in nonnegative degree and $\mathrm{sym}_{F(\delta)}/\mathrm{sym}^\mathrm{ne}_{F(\delta)}$ is isomorphic to the algebra $\mathrm{sl}_2$.
\end{rem}

\subsection{Construction of canonical frames for curves of constant type $\delta$} Fix $\delta\in \mathfrak g_{-1}$ again. Now we will describe the unified construction of canonical bundle of moving frames for all curves of constant type $\delta$.
Fix a curve $\gamma$. Let $\pi:G\rightarrow G/G^0$ be the canonical projection. By a \emph{moving frame bundle} $B$ over $\gamma$ we mean any subbundle (not necessarily principal) of the $G^0$-bundle $\pi^{-1}(\gamma)\rightarrow \gamma$. 

Let $B(t)=\pi^{-1}\bigl(\gamma(t)\bigr)\cap B$ be the fiber of $B$ over the point $\gamma(t)$. Given any $\Gamma \in B(t)$ consider the tangent space $T_\Gamma B(t)$ to the fiber $B(t)$ at
$\Gamma$. This  space can be identified with the following subspace $W_\Gamma$ of the Lie algebra $\mathfrak g^0$ via the left Maurer-Cartan form $\Omega$: 
\begin{equation}
\label{Lpsi}
W_\Gamma:=\Omega_\Gamma\left(T_\Gamma B(t)\right).
\end{equation}
If $B$ is a principal bundle over our curve, which is a reduction of the bundle $\pi^{-1}(\gamma)\rightarrow \gamma$, then the space $L_\Gamma$ is independent of $\Gamma$ and equal to the Lie algebra of the structure group of the bundle $B$. For our purposes here we need to consider more general class of fiber subbundles of $\pi^{-1}(\gamma)\rightarrow \gamma$. 
To define this class first consider the decreasing filtration  $\{\mathfrak g^k\}_{k\geq 0}$of the graded space $\mathfrak g^0$ where 
$$\mathfrak g^k=\bigoplus_{i\geq k} \mathfrak g_i.$$ 

Given a subspace $U$ of $\mathfrak g^0$ let $U^k=U\cap \mathfrak g^k$. Note that the quotient space $\mathfrak g^k/\mathfrak g^{k+1}$ is naturally identified with $\mathfrak g_k$. Therefore, since the quotient space $U^k/U^{k+1}$ can be considered as a subspace of  $\mathfrak g^k/\mathfrak g^{k+1}$, $U^k/U^{k+1}$ can be naturally identified with a subspace $U_k$ of $\mathfrak g_k$. With this notation, we assign to each subspace $U$ of $\mathfrak g^0$ a graded subspace  $\mathrm {gr}\, U:= \displaystyle{\bigoplus_{k\geq 0} U_k}$ of $\mathfrak g^0$. Note that this space is in general different from the original space $U$. 

The space $\mathrm {gr} W_\Gamma$, where $W_\Gamma$ is as in \eqref{Lpsi}, is called the \emph{symbol} of the bundle $B$ at the point $\Gamma$.

\begin{definition}
\label{symbbunddef}	
 We say that the fiber  subbundle $B$ of  $\pi^{-1}(\gamma)\rightarrow \gamma$ 
 has a \emph{constant symbol $\mathfrak s$} if its symbols at different points
 coincide with $\mathfrak s$. In this case we call $B$ the \emph{quasi-principal subbundle of the bundle  $\pi^{-1}(\gamma)\rightarrow \gamma$ with symbol $\mathfrak s$.} 
\end{definition}


Let $[\delta, \mathfrak g_k]:=\{[\delta, y]:y\in \mathfrak g_k\}$.
\begin{definition}
Let $N=\displaystyle{\bigoplus_{k\geq 0}} N_k$ be a graded subspace of $\mathfrak g^0$, i.e. such that $N_k\subset\mathfrak g_k$. 	We say that $N$ defines a normalization condition if for any $k\geq 0$ the subspace $N_k$ is complementary  to $\mathfrak u_k+[\delta,\mathfrak g_{k+1}]$ in $\mathfrak g_k$.
\begin{equation}
\label{normacond}
\mathfrak g_k= \bigl(\mathfrak u_k(\delta)+[\delta, \mathfrak g_{k+1}]\bigr)\oplus N_k, \,\,k\geq 0.
\end{equation}
In this case we also say that the graded subspace $N_k$ is complementary to $\bigl(\mathfrak u(\delta)+[\delta, \mathfrak g^0]\bigr)\cap\mathfrak g^0$ in $\mathfrak g^0$.
\end{definition}

\begin{thm}
Given a normalization condition $N$,  for any curve $\gamma$ of constant type $\delta$ the set of moving frames $\Gamma(t)$ such that its structure function $C_\Gamma$ satisfies 
\begin{equation}
\label{normframe}
C_ \Gamma(t)-\delta\in N, \quad \forall t,
\end{equation} 
foliates the fiber subbundle of the bundle $\pi^{-1}(\gamma)\rightarrow\gamma$ of constant symbol $\mathfrak u(\delta)$. Moreover, if $N$ is invariant with respect to the adjoint action of the group $\mathrm {Sym}^{\mathrm{ne}}_{F(\delta)}$ of noneffective symmetries of the flat curve $F(\delta)$ of constant type $\delta$, then the resulting bundle is a principal  $\mathrm {Sym}^{\mathrm{ne}}_{F(\delta)}$-subbundle of  $\pi^{-1}(\gamma)\rightarrow\gamma$ and the foliation of moving frames, satisfying \eqref{normframe}, is invariant with respect to the principal  $\mathrm {Sym}^{\mathrm{ne}}_{F(\delta)}$-action.
\end{thm}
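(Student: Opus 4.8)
The plan is to work entirely with the gauge (right $G^0$-) action on the frames over $\gamma$ and to track how it acts on the structure function degree by degree. Fix any smooth reference lift $\Gamma_*$ of $\gamma$; every other frame over $\gamma$ is $\Gamma=\Gamma_* g$ with $g\colon I\to G^0$, and the structure function transforms by the standard formula $C_{\Gamma_* g}=\mathrm{Ad}(g^{-1})C_{\Gamma_*}+C_g$, where $C_g=g^{-1}g'$ is the pullback of the Maurer--Cartan form of $G^0$. Since $\gamma$ is compatible with osculation, $C_\Gamma\in\mathfrak g_{-1}\oplus\mathfrak g^0$ for every frame, and since $\gamma$ has constant type $\delta$, Lemma \ref{constlem} lets me first use the degree-zero (Levi) part of $g$ to arrange $(C_\Gamma)_{-1}=\delta$; the remaining freedom in this step is the stabiliser $U_0(\delta)$ of Remark \ref{u0rem}. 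I record the two elementary facts that drive the induction: gauging by $\epsilon\in\mathfrak g_{k}$ with $k\ge 1$ changes $(C_\Gamma)_{k-1}$ by $[\delta,\epsilon]$ and leaves every component of degree $<k-1$ unchanged, while the Maurer--Cartan term of a degree-$k$ gauge contributes $\epsilon'(t)$ in degree $k$.

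The heart of the proof is an induction on the degree $k\ge 0$ normalising $(C_\Gamma)_k$ into $N_k$. Assume $(C_\Gamma)_{-1}=\delta$ and $(C_\Gamma)_j\in N_j$ for $j<k$. The infinitesimal freedom still available that affects degree $k$ without disturbing the lower degrees has exactly two sources: pointwise gauging by $\mathfrak g_{k+1}$, contributing $[\delta,\mathfrak g_{k+1}]$, and the $t$-derivative of the residual symmetry freedom $\mathfrak u_k(\delta)$, contributing $\mathfrak u_k(\delta)$ through its Maurer--Cartan term (the companion disturbance $[\delta,\mathfrak u_k(\delta)]\subseteq\mathfrak u_{k-1}(\delta)$ in degree $k-1$ is reabsorbed by the residual $\mathfrak u_{k-1}(\delta)$-freedom, which is why the construction couples the degrees). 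Thus the total reachable variation of $(C_\Gamma)_k$ is precisely $\mathfrak u_k(\delta)+[\delta,\mathfrak g_{k+1}]$, and the normalization condition \eqref{normacond}, stipulating that $N_k$ is complementary to this very subspace, is exactly what guarantees that $(C_\Gamma)_k$ can be driven into $N_k$ and, moreover, that this can be done uniquely up to the residual $\mathfrak u_k(\delta)$-freedom. Organised globally, these conditions for all $k$ assemble into a triangular first-order ODE system for $g(t)$, whose solvability and $\dim\mathfrak u(\delta)$-parameter family of solutions (the freedom being the initial value of the residual part in each degree) follow from the existence--uniqueness theorem for linear ODEs as in Lemma \ref{fundlem}.

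It follows that normalized frames exist and that, as their initial data range over a $\dim\mathfrak u(\delta)$-dimensional set, they sweep out a fiber subbundle $B\subseteq\pi^{-1}(\gamma)$. To identify its symbol I compute $W_\Gamma=\Omega_\Gamma\bigl(T_\Gamma B(t)\bigr)$ from the residual freedom described above: the degree filtration of $W_\Gamma$ has associated graded pieces $\mathfrak u_k(\delta)$ in each degree $k$, so $\mathrm{gr}\,W_\Gamma=\mathfrak u(\delta)$ independently of $t$ and of $\Gamma\in B(t)$; hence $B$ has constant symbol $\mathfrak u(\delta)$ in the sense of Definition \ref{symbbunddef}. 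Because a normalized frame is, by the uniqueness half of the previous paragraph, determined by its value at any single point, through each point of $B$ there passes exactly one normalized frame; these frames are therefore the leaves of a foliation of $B$, which proves the first assertion.

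For the final assertion, assume $N$ is invariant under $\mathrm{Ad}(\mathrm{Sym}^{\mathrm{ne}}_{F(\delta)})$. The residual freedom relating two normalized frames is not the naive right action by constants of $\mathrm{Sym}^{\mathrm{ne}}_{F(\delta)}$ --- that action fails to fix $\delta$ --- but the twisted action modelled on the flat case, in which $s\in\mathrm{Sym}^{\mathrm{ne}}_{F(\delta)}$ sends a frame $\Gamma(t)$ to $\Gamma(t)\,g_s(t)$ with $g_s(t)=\exp(-t\delta)\,s\,\exp(t\delta)$, exactly as for $F(\delta)$. Using $\mathrm{Ad}(\mathrm{Sym}^{\mathrm{ne}}_{F(\delta)})N=N$ one checks that this action carries normalized frames to normalized frames, hence preserves $B$ and permutes its leaves; a dimension count, namely $\dim\mathrm{Sym}^{\mathrm{ne}}_{F(\delta)}=\dim\mathfrak u(\delta)=\dim B(t)$, together with the uniqueness of normalized frames, shows the action is simply transitive on each fiber. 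Therefore $B$ is a principal $\mathrm{Sym}^{\mathrm{ne}}_{F(\delta)}$-bundle and, since the action maps leaves to leaves, the foliation by normalized frames is invariant. The main obstacle throughout is the inductive step of the second paragraph: correctly identifying the reachable variation of $(C_\Gamma)_k$ as $\mathfrak u_k(\delta)+[\delta,\mathfrak g_{k+1}]$ --- in particular extracting the $\mathfrak u_k(\delta)$-direction from the derivative of the residual symmetry while controlling the coupled disturbance it creates in degree $k-1$ --- and upgrading the pointwise normalization into a globally consistent ODE.
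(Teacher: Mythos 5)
Your argument is correct, and its core mechanism --- the degree-by-degree induction in which the complementarity \eqref{normacond} of $N_k$ to $\mathfrak u_k(\delta)+[\delta,\mathfrak g_{k+1}]$ simultaneously gives existence of the normalization at degree $k$ and pins the residual freedom down to $\mathfrak u_k(\delta)$ --- is the same as the paper's. The packaging differs in two respects. First, you gauge-fix a single $G^0$-valued function $g(t)$ relative to a reference lift and solve a triangular ODE system, whereas the paper builds a decreasing tower of subbundles $B^{-1}\supset B^0\supset B^1\supset\cdots$, with $B^k$ the union of frames normal up to order $k$ and of constant symbol $\bigoplus_{i=0}^{k}\mathfrak u_i(\delta)\oplus\mathfrak g^{k+1}$, performing the pointwise normalization by right translations $R_{\exp x}$, $x\in\mathfrak g_k$, via the equivariance \eqref{MCequiv} of the Maurer--Cartan form; your ``reabsorption'' of the $\mathfrak u_{k-1}(\delta)$-disturbance created by the degree-$k$ residual freedom is exactly the paper's observation that the degree-$(k-1)$ component $\xi_{k-1}$ is only well defined modulo $\mathfrak u_{k-1}(\delta)$ on $B^{k-1}$. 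The paper's version keeps the symbol of the bundle visible at every stage; yours makes the role of linear ODE existence/uniqueness more explicit. Second, for the principal structure the paper applies the constant right translation $R_{\exp u}$, $u\in\mathrm{sym}^{\mathrm{ne}}_{F(\delta)}$, while you use the twisted action $\Gamma(t)\mapsto\Gamma(t)\exp(-t\delta)\,s\,\exp(t\delta)$. Your variant is the more robust one: the constant action preserves \eqref{normframe} only when $\mathrm{Ad}(\exp(-u))\delta=\delta$, which holds in the paper's intended application because $\mathfrak u(\delta)=\mathfrak u_0(\delta)$ there (Proposition \ref{prolongprop}), but for a general symbol with nontrivial higher prolongation the term $\mathrm{Ad}(\exp(-u))\delta-\delta$ lies in $\mathfrak u(\delta)$ and would push the structure function out of $\delta+N$. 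So on the second assertion your route is a genuine improvement rather than a rephrasing; everything else is an equivalent reorganization.
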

\begin{proof}
	We will say that a moving frame $\Gamma(t)$ is \emph{normal up to order $k\geq 0$}, if  its structure function $C_\Gamma$ satisfies
	\begin{equation}
	\label{normframepart-1}
	\bigl(C_ \Gamma(t)\bigr)_{-1}=\delta
	\end{equation}
	for all $t$ and 
	\begin{equation}
	\label{normframepart}
	\bigl(C_ \Gamma(t)\bigr)_i\in N_i
	\end{equation} 
for all $t$ and $0\leq i<k$.

We will construct by induction  the decreasing  sequence of subbundles 
\begin{equation}
\label{fiber_chain}
B^{-1}=\pi^{-1}(\gamma)\supset B^0\subset B^1\supset \ldots 
\end{equation}
such that $B^k$ is the union of all normal up to order $k$ moving frames. Moreover, the bundle $B^k$ has constant symbol $\displaystyle{\bigoplus_{i=0}^k \mathfrak u_i(\delta)\oplus \mathfrak g^{k+1}}$.
	 
Let us describe this inductive procedure.
For $k=0$, the condition \eqref{normframepart} is void and by \eqref{MCAs}  the condition \eqref{normframepart-1} is equivalent to
\begin{equation*}
\Bigl(\Gamma(t)^{-1} \delta_t \Gamma(t)\Bigr)_{-1}= \delta.
\end{equation*}
By Definition \ref{constdef} of curves of constant type $\delta$ such $\Gamma(t)$ exists for any $t$ , i.e. $B^0$ is 
not empty. Moreover, by Remark \ref{u0rem} $B_0$ is the principal reduction of the principal bundle $B_{-1}$ with a structure group $U^0(\delta)$ as in \eqref{u^0} and with the Lie algebra  $\mathfrak u_0\oplus\mathfrak g^1$. In particular, the latter is the symbol of this algebra.

Now assume by induction that the bundle $B_{k-1}$ with the properties above is constructed for some $k\geq 1$ and construct the next bundle $B^k$.  Since, 
by assumptions,  \eqref{normframepart} holds for $i<k-1$, the symbol of $B^{k-1}$ is  $\displaystyle{\bigoplus_{i=0}^{k-1} \mathfrak u_i(\delta)}\oplus \mathfrak g^{k}$, and   the spaces $N_i$ and $\mathfrak u_i(\delta)$ intersect trivially by \eqref{normacond}, we have that if two  normal up to order $k-1$ moving frames $\Gamma$ and $\widetilde\Gamma$  pass through the same point at $t=t_0$, i.e. $\Gamma(t_0)=\widetilde\Gamma(t_0)$, then 
\begin{eqnarray}
&~&	\Bigl(C_\Gamma(t_0)\Bigr)_i=\Bigl(C_{\widetilde\Gamma}(t_0)
\Bigr)_i, \quad 0\leq i\leq k-2;\label{uptok-2}\\
&~&	\Bigl(C_\Gamma(t_0)\Bigr)_{k-1}=\Bigl(C_{\widetilde\Gamma}(t_0)\Bigr)_{k-1} \quad \mod \mathfrak u_{k-1}(\delta). \label{uptok-1}
\end{eqnarray}
In other words, for $0\leq i\leq k-2$  the degree $i$ component of the structure function of a normal up to order $k-1$ frame, passing through a point $b\in B^{k-1}$, depends not on the frame but on $b$ only, while the degree $k-1$ component depends on $b$ modulo $\mathfrak u_{k-1}(\delta)$. For the latter case, let us denote by $\xi_{k-1}(b)$ the corresponding element of $\mathfrak g_{k-1} \mathrm {mod}\, \mathfrak u_{k-1}$ or, equivalently, of $\mathfrak g_{k-1}/\mathfrak u_{k-1}(\delta)$.

Now let us explore how the function $R_{k-1}$ changes along the fiber of $B^{k-1}$.
Denote by $R_a$ the right translation by $a$ in the group $G$.
\begin{lem}
The following identity holds
\begin{equation}
\label{structtrans}
\xi_{k-1}\bigl(R_{ \mathrm{exp} \,x}b\bigr)=\xi_{k-1}(b)+[\delta, x]_{k-1} \quad \mathrm {mod}\,\, \mathfrak u_{k-1}(\delta), \quad \forall  x\in g^k,
\end{equation} 
or, equivalently,
\begin{equation}
\label{structtranscase}
\xi_{k-1}\bigl(R_{ \mathrm{exp}\, x}b\bigr)=\begin{cases}\xi_{k-1}(b)+[\delta, x]\quad \mathrm {mod}\,\, \mathfrak u_{k-1}(\delta), &  x\in g_k;\\
\xi_{k-1}(b) \quad \mathrm {mod}\,\, \mathfrak u_{k-1}(\delta), &   x\in g^{k+1}.\\
\end{cases}
\end{equation} 
\end{lem}
\begin{proof}
 We use the following equivariance property of the left Maurer -Cartan form:

\begin{equation}
\label{MCequiv}
R_a^*\Omega=\Bigl(\mathrm{Ad}\, a^{-1}\Bigr)\Omega, \quad \forall a\in G.
\end{equation}
Let us prove \eqref{MCequiv} for completeness. Indeed,
using the definition of the left Maurer-Cartan form given by \eqref{MC} and the fact that any left translation commutes with the right translation, we have 
$$R_a^*\Omega_b (y)=\Omega _{ba} \bigl((R_a)_*y\bigr)=
\Bigl((L_{(ba)^{-1}})_*\circ  (R_a)_*\Bigr)(y)=\Bigr((L_{a^{-1}})_*\circ(R_a)_*\circ (L_{b^{-1}}\Bigr)_*(y)=\Bigl(\mathrm{Ad}\, a^{-1}\Bigr)\Omega_b(y),$$
for any $y\in \mathfrak g$, which proves \eqref{MCequiv}. 

Take a moving frame $\Gamma$ over $\gamma$ and  consider the moving frame $R_{ \mathrm{exp} \,x}(\Gamma)$.  Using formula \eqref{MCequiv}  and the Taylor expansion as in \eqref{Taylor} with $\delta=x$, one can  relate the structure functions of the frames  $\Gamma$ and $R_{ \mathrm{exp} \,x}(\Gamma)$ in the following way:

\begin{equation}
\label{expan}
C_ {R_{ \mathrm{exp} \,x}(\Gamma)}(t)= C_\Gamma(t)+
\sum_{k=1}^{\infty}\cfrac{1}{k!}\,\,\bigl(\mathrm{ad}(-x)\bigr)^k C_{\Gamma}(t),
\end{equation}
where the sum is actually finite. Comparing the homogeneous components of degree less than $k-1$  in both sides of \eqref{expan}, we get that if $x\in \mathfrak  g^k$ and the moving frame $\Gamma$ is normal up to the order $k-1$,  then the moving frame $R_{\mathrm{exp} \,x(\Gamma)}$ is normal up to the order $k-1$. Therefore, comparing the degree $k-1$ components in both sides of \eqref{expan} and using \eqref{normframepart-1}, we can replace the structure function by the function $\xi_{k-1}$ evaluated at the appropriate points to obtain \eqref{structtrans}.
\end{proof}

Now, from \eqref{structtranscase} and the fact that $N_{k-1}$ is complementary to $[x, \mathfrak g_k]$ mod $\mathfrak u_{k-1}(\delta)$ it follows that for $b\in B^{k-1}$,  one can find  $x\in \mathfrak g_k$ such that

\begin{equation}
\label{almostnorm}
\xi_{k-1}(R_{\mathrm{exp} \,x} b)\in N_{k-1} \,\, \mathrm{mod}\,\, \mathfrak u_{k-1}(\delta). 
\end{equation}

Moreover, since the degree $k-1$ component of the symbol of the bundle $B^{k-1}$ is equal to $\mathfrak u_{k-1}(\delta)$ one can find a normal up to order $k$ moving frame $\Gamma$ passing through $R_{\mathrm{exp} x} b$ which implies that $R_{\mathrm{exp} x} b\in B^k$, i.e $B^k(t_0)$ is not empty, where $t_0$ is such that $\pi(b)=\gamma(t_0)$.

Now, if $b\in B^k$ , then \eqref{structtranscase} implies that $R_{\mathrm{exp}\,x} b\in B^k$ for $x\in \mathfrak g_k$ if and only if 
\begin{equation}
\label{freedom}
[x, \delta]\in N_{k-1}\oplus\mathfrak u_{k-1}(\delta).
\end{equation}
Since by \eqref{normacond} $N_{k-1}$ is transversal to $\mathfrak u_{k-1}(\delta)+[\delta, \mathfrak g_k]$ in $\mathfrak g_{k-1}$, the relation \eqref{freedom} implies that $[\delta, x]\in \mathfrak u_{k-1}(\delta)$, hence $x\in \mathfrak u_k(\delta)$. This implies that $B_k$ is the fiber subbundle of $B^{-1}$ with constant symbol  $\displaystyle{\bigoplus_{i=0}^k \mathfrak u_i(\delta)\oplus \mathfrak g^{k+1}}$, which concludes the proof of the induction step.

Since there exists an integer $m$ such that $\mathfrak g_i=0$ for all $i\geq m$, the sequence of bundles \eqref{fiber_chain} will be stabilized, i.e $B^i=B^m$ for all $i\geq m$. Moreover, the normal up to order $m$ moving frames  will foliate $B^m$, because for any point $b\in B^m$ the structure function of any normal up to order $m$ moving frame and therefore the tangent line to such a moving frame will depend  on the point $b$ only. So, there is a unique normal up to order $m$ moving frame which passes through $b$. So, $B^m$ is the desired bundle of moving frames, which  completes the proof of the first part of the theorem.

The moving frames, which are normal up to order $m$, will be called simply \emph{normal}.
If $\mathcal N$ is invariant with respect to the adjoint action of $\mathrm{Sym}^{\mathrm{ne}}_{F(\delta)}$, then by \eqref{MCequiv}, if the moving frame $\Gamma$ is normal, then for any $u\in \mathrm{sym}^{\mathrm{ne}}_{F(\delta)}$ the moving frame $R_{exp\, u} (\Gamma)$ is normal as well.
Hence the bundle  $B^m$ is a principal $U^0( \delta)$-bundle and the foliation of normal moving frames is invariant with respect to $R_{exp\, u}$, which  completes the proof of the second part of our theorem.
\end{proof}

\section{Application to differential geometry of monotonic parametrized curves in Lagrangian Grassmannians}

Now we apply the general algebraic theory, developed in the previous section, to construct the canonical bundle of moving frames for monotonic parametrized curves in Lagrangian Grassmannians. For this we will  
first classify all possible symbols of their osculating flags, compute their algebraic prolongation, and find the natural invariant normalization conditions. 

\subsection{Classification of symbols of monotonic curves in Lagrangian Grassmannians}
Let  $\Lambda(t)$ be a parametrized equiregular curve in Lagrangian Grassmannians $L(V)$. As in formula  \eqref{filt} of subsection \ref{oscflagsec}, let $\{\Lambda^{j}\}_{j\in Z}$ be the osculating flag. 
We do not lose much by assuming that there exists a negative   integer $p$
such that 
\begin{equation}
\label{amplep}
\Lambda^{(p)}(t)=V
\end{equation}
Otherwise, if
$\Lambda^{(p-1)}(t)=\Lambda^{(p)}(t)\subset V$, then the subspace
$\tilde V=\Lambda^{(p)}(t)$ does not depend on $t$ and one can work with
the curve $\Lambda(t)/\widetilde V^\angle$ in the symplectic space $\widetilde V/\widetilde V^\angle$
instead of $\Lambda(t)$. 

\begin{definition}
	\label{ampledef}
	The curve $\Lambda$ is called ample, if for any $t$ there exists $p$ for which \eqref{amplep} holds.
\end{definition}

Let $D$ be the Young diagram of the curve $\Lambda$ (see Definition \ref{Youngdef}).
Let the length of the rows of $D$ be $p_1$ repeated $r_1$ times,
$p_2$ repeated $r_2$ times, $\ldots$, $p_s$ repeated $r_s$ times
with $p_1>p_2>\ldots>p_s$. \emph {The reduction or the reduced Young diagram  of the Young diagram $D$} is the Young diagram $\Delta$, consisting of $k$ rows such that
the $i$th row has $p_i$ boxes.

Make the mirror reflection  of the Young diagram $\Delta$
with respect to its left vertical edge . Denote the skew-diagram obtained by union of this mirror reflection and $\Delta$ by $\widetilde\Delta$. Denote by $r$ and $l$ the right and left shifts on $\widetilde \Delta$, respectively. In other words given a box $a$ of $\widetilde\Delta$ denote by $r(a)$ and $l(a)$  the boxes next to $a$ to the right and to the left, respectively, in the same row of $\Delta$.   Also let $m:\widetilde \Delta\rightarrow \widetilde\Delta$ be the mirror reflection with respect to the left vertical edge of the diagram $\Delta$, i.e. the map sending a box $a$ of $\widetilde \Delta$ to  the box which is mirror-symmetric to $a$ with respect to this left edge.
 
We say that the basis $\{E_a\}_{a\in \widetilde\Delta}$ of $V$, where for a box $a$ from the $i$th row of $\widetilde \Delta$ $E_a$ is the tuple of $r_i$ vectors in $V$, $E_a=(e_a^1,\ldots e_a^{r_i})$, forms a \emph{Darboux basis indexed by the diagram $\widetilde D$}, if  any vector from the tuple $E_a$ is skew-orthogonal to any vector from the tuple  $E_b$ for $b\neq m(a)$ and

\begin{equation}
\label{Darboux}
\sigma(e_{m(a)}^i, e_a^j)=\delta_{ij} ,\quad  a \in \Delta.
\end{equation}

Given a tuple of vectors $E$ in $V$ and an endomorphism $X$ of $V$, by $XE$ we denote the tuple of vectors obtained by applying $X$ to vectors of $E$. If $Y$ is a matrix  with the same number of rows as the number of vectors in E, then  by $EY$ we mean the new tuple with the $j$th vector equal to the linear combination of vectors $E$ with coefficients in $j$th column of $Y$.
 
\begin{exer} 
	Show that  the map $X\in\mathfrak{sp}(X)$ if and only if  it has the representation in a Darboux basis   $\{E_a\}_{a\in \widetilde\Delta}$
\begin{equation}
\label{matrixrep}
X(E_a)=\sum_{b\in \widetilde \Delta} E_b X_{ba}
\end{equation}
such that 
\begin{eqnarray}
&~& X_{ab}=X^T_{ m(b) m(a)}, \quad a\in\Delta,b \in m(\Delta)\label{ab1}\\
&~&  X_{ab}=-X^T_{ m(b) m(a)}\ a,b\in\Delta\label{ab2}
\end{eqnarray}
\end{exer}

If we denote
\begin{equation}
\label{epsilon}
\varepsilon(a)=\begin{cases}
-1, & a\in\Delta;\\
1,& a \in m(\Delta)
\end{cases}
\end{equation}
then \eqref{ab1} and \eqref{ab2} can be written as 
\begin{equation}
\label{abgen}
 X_{ab}=-\varepsilon(a)\varepsilon(b)X^T_{ m(b) m(a)}
\end{equation}

\begin{prop}
	\label{Youngprop}
	Any monotonic equiregular ample curve with Young Diagram  $D$ in a Lagrangian Grassmannian has the unique symbol represented by the endomorphism $\delta$ acting on a  Darboux basis $\{E_a\}_{a\in \widetilde\Delta}$ as follows:
	\begin{equation}
	\label{normformdelta}
	\delta(E_a)=\varepsilon(a)E_{r(a)},
\end{equation}
where $\varepsilon (a)$ is defined in \eqref{epsilon}.
In particular, there is one-to-one correspondence between 
the set of Young diagrams and the set of symbols of monotonic curve in Lagrangian Grassmannians. 
\end{prop}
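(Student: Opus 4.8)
The plan is to translate the statement into a normal-form problem for the symbol $\delta_t$, regarded as a degree $-1$ nilpotent element of $\mathfrak{sp}\bigl(\mathrm{gr}\,V(t)\bigr)$, and to show that monotonicity forces this nilpotent into the ``hard Lefschetz'' $\mathfrak{sl}_2$-string form dictated by the Young diagram. First I would record the two structural facts already available: by Remark~\ref{surinrem} the map $\delta_t\colon V_j\to V_{j-1}$ is injective for $j\ge 0$ and surjective for $j\le 0$, and by \eqref{skewsymm} the dimensions satisfy $\dim V_j=\dim V_{-1-j}$. In particular $\delta_t\colon V_0\to V_{-1}$ is a surjection between spaces of equal dimension, hence an isomorphism, and more generally each composite $\delta_t^{\,2j+1}\colon V_j\to V_{-1-j}$ is a map between spaces of equal dimension. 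For $j\ge 0$ I would introduce the quadratic form $Q_j(x):=\sigma\bigl(\delta_t^{\,2j+1}x,x\bigr)$ on $V_j$; since $\delta_t\in\mathfrak{sp}$ one has $\sigma(\delta_t a,b)=-\sigma(a,\delta_t b)$, so $Q_j$ is symmetric, and since $V_{-1-j}$ is dually paired with $V_j$ by the form induced from $\sigma$, the form $Q_j$ is nondegenerate precisely when $\delta_t^{\,2j+1}$ is an isomorphism. Thus proving the string structure amounts to proving that every $Q_j$ is nondegenerate.

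The decisive step is to deduce from monotonicity that each $Q_j$ is sign-definite. For $j=0$ this is almost immediate: $Q_0$ is the velocity form on $V_0=\Lambda(t)/\Lambda^{(1)}(t)$, which is nonnegative by monotonicity and nondegenerate by the isomorphism $\delta_t\colon V_0\to V_{-1}$ above, hence positive definite. For $j>0$ the positivity of $Q_j$ does not follow from the pointwise symbol alone; I would obtain it by differentiating the velocity form along the curve and passing to the associated graded, so that the higher forms $Q_j$ inherit the definiteness of the velocity form of the monotonic curve $\Lambda(t)$. This propagation of positivity from order $0$ to all orders is the main obstacle: it is the only place where monotonicity, as opposed to mere equiregularity and ampleness, is used, and it is what singles out one symbol among the finitely many degree $-1$ symbols with the prescribed dimensions. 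Granting it, every $\delta_t^{\,2j+1}\colon V_j\to V_{-1-j}$ is an isomorphism, which is exactly the hard Lefschetz condition; by the Lefschetz decomposition of $\mathfrak{sl}_2$-modules, $\mathrm{gr}\,V(t)$ then splits under $\delta_t$ into a direct sum of $\delta_t$-strings symmetric about degree $-\tfrac12$, the string of length $2p_i$ occurring with multiplicity $r_i$. Comparing the contributions of the strings to $\dim V_j$ with Definition~\ref{Youngdef} identifies these lengths and multiplicities with the rows of $\widetilde\Delta$.

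It remains to put $\delta_t$ and $\sigma$ into the stated normal form. I would choose, for each string length, a basis of the corresponding space of highest-degree (primitive) vectors and generate the strings by applying $\delta_t$; this produces the tuples $E_a$ indexed by the boxes of $\widetilde\Delta$, with $\delta_t(E_a)$ a scalar multiple of $E_{r(a)}$. Because $\sigma$ pairs degree $j$ only with degree $-1-j$, it pairs a box $a$ only with its mirror $m(a)$, and the $\mathfrak{sp}$-invariance of $\sigma$ in the Darboux matrix form \eqref{abgen} constrains the remaining constants; the definiteness of the forms $Q_j$ then lets me rescale the primitive bases so that these pairings equal $+\delta_{ij}$ exactly as in \eqref{Darboux}, and the same rescaling normalizes the structure constants of $\delta_t$ to the signs $\varepsilon(a)$ of \eqref{epsilon}, yielding \eqref{normformdelta}. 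Finally, uniqueness and the claimed bijection are formal: the complete invariant of $\delta_t$ under symplectic grading-preserving conjugation is the collection of string lengths, multiplicities, and the signatures of the $Q_j$; monotonicity has forced all these signatures to be definite, so the invariant reduces to the string lengths and multiplicities, i.e.\ to the Young diagram $D$. Hence any two monotonic equiregular ample curves with the same $D$ have equivalent symbols, curves with different $D$ have inequivalent symbols, and the correspondence $D\leftrightarrow[\delta]$ is a bijection.
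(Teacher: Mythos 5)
Your skeleton is the right one, and you have correctly isolated the crux: everything reduces to showing that each form $Q_j(x)=\sigma_t(\delta_t^{2j+1}x,x)$ on $V_j(t)$ is nondegenerate, equivalently that $\delta_t^{2j+1}\colon V_j(t)\to V_{-j-1}(t)$ is an isomorphism (this is exactly Lemma~\ref{classlem} in the text). But at precisely that point you leave a genuine gap: you assert that for $j>0$ the definiteness of $Q_j$ ``does not follow from the pointwise symbol alone'' and propose to obtain it by differentiating the velocity form along the curve — an argument you do not carry out, and which you yourself flag as ``the main obstacle.'' A proof that says ``granting it'' at its only nontrivial step is not a proof. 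Moreover, the premise is false: the definiteness of $Q_j$ \emph{does} follow pointwise from the symbol together with monotonicity at order zero. The identity \eqref{oms}, which is a purely algebraic consequence of the osculating flag being symplectic, gives with $s=j$
\begin{equation*}
\sigma_t(\delta_t^{2j+1}x,x)=(-1)^j\,\sigma_t\bigl(\delta_t(\delta_t^{j}x),\,\delta_t^{j}x\bigr),
\end{equation*}
i.e.\ $(-1)^jQ_j$ is the pullback of $Q_0$ under $\delta_t^{j}\colon V_j(t)\to V_0(t)$. By Remark~\ref{surinrem} this map is injective, and by monotonicity $Q_0$ is positive definite on all of $V_0(t)$ (nonnegative plus nondegenerate, as you note); the restriction of a positive definite form to a subspace is positive definite, hence $(-1)^jQ_j$ is positive definite and in particular nondegenerate. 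No differentiation along the curve is needed, and no ``propagation of positivity'' beyond this one-line pullback.

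Once that lemma is in hand, the remainder of your argument (the $\mathfrak{sl}_2$-string/Lefschetz decomposition, the inductive construction of the Darboux basis from orthonormal bases of the primitive parts with respect to the forms $(-1)^jQ_j$, and the identification of string lengths and multiplicities with the rows of $\widetilde\Delta$) matches the paper's construction in substance, merely phrased in representation-theoretic rather than hands-on language, and the uniqueness claim follows as you say. So the fix required is local but essential: replace the hand-waved analytic step by the algebraic identity above.
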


\begin{proof}
Let $V_j$ and $\mathrm{gr} V(t)$ be as \eqref{gradj} and \eqref{grad}. 
Let $\delta_t$ be the symbol of the curve of osculating  flags
$\{\Lambda^{j}\}_{j\in Z}$ at $t$. 

\begin{lem}
	\label{classlem}
	If $\Lambda$ satisfies the assumption of Proposition \ref{Youngprop}, then  
	for any
	$j\geq 0$ the map $\delta_t^{2j+1}:V_{j}(t)\rightarrow V_{-j-1}(t)$ is an isomorphism.  
\end{lem}

\begin{proof}
	Let $\sigma_t$ be the natural symplectic form on $\mathrm{gr} V_t$ induced by the symplectic form on $V$ as described in subsection \ref{oscflagsec}. Note that the bilinear form $(x,y)\mapsto \sigma_t(\delta_t x, y)$ on $V_0(t)$ is symmetric and nondegenerate, as follows from \eqref{sympalg} and the construction of the spaces $\Lambda^{(1)}(t)$.
	From the fact that the osculating flags are symplectic it follows that  
	\begin{equation}
	\label{oms}
	\sigma_t(\delta_t^{2j+1} x,  y)=(-1)^s\sigma_t(\delta_t^{2j+1-s} x, \delta_t^s
	y)
	\end{equation}
	for all $ x, y\in V_{j}(t)$. In particular, for $s=j$ 
	$$\sigma_t(\delta_t^{2j+1} x,  y)=(-1)^j\sigma_t(\delta_t^{j+1} x, \delta_t^j y)$$
	for all $ x, y\in V_{j}(t)$. This implies that the bilinear form  $\sigma_t(\delta_t^{2j+1} x,  y)$ on $V_j(t)$ is
	symmetric and the  desired condition that  $\delta_t^{2j+1}:V_{j}(t)\rightarrow V_{-j-1}(t)$ is an isomorphism is equivalent to the fact that this form is nondegenerate. Besides, since $\delta_t^j:V_j(t)\mapsto V_0(t)$ is injective (see Remark \ref{surinrem}) the latter statement is equivalent to the fact that the symmetric bilinear form $\sigma_t(\delta_t x, y)$ (and hence the corresponding quadratic form) is nondegenerate on the subspace $\delta_T^j(V_j)$ of $V_0(t)$. Finally, since by the assumption the curve $\Lambda$ is monotonic, the quadratic form $\sigma_t(\delta_t x, y)$ is positive definite on $V_0(t)$, and hence its restriction to any subspace of $V_0(t)$ is positive definite and hence  nondegenerate. This completes the proof of the lemma.
\end{proof}

Let $\rho_i$ be the last (i.e. the most right) box of the $i$th row of the reduced Young diagram  $\Delta$ of $D$ and let again $p_i$ is the number of boxes in the $i$th row of $\Delta$. Let $E_{m(\rho_1)}(t)$ be a basis of $V_{p_1}(t)$ orthonormal with 
respect to the inner product  $(-1)^{p_1}\sigma_t(\delta_t^{2p_1+1}x,y)$. Then for $0\leq s< p_1-p_2-1$ set $$E_{r^s\bigl(m(\rho_1)(t)\bigr)}:=\delta_t^s E_{m(\rho_1)}(t).$$
Note that by \eqref{oms} the tuples $\delta_t^s E_{m(\rho_1)}(t)$ 
are orthonormal with respect to the inner product $$(-1)^{p_1-s}\sigma_t(\delta_t^{2(p_1-s)+1} x, x).$$ Further,  let $E_{m(\rho_2)}(t)$ be a completion of the tuple 
$\delta_t^{p_2-p_1-1} E_{m(\rho_1)}(t)$ to an orthonormal 
basis of $V_{p_2}(t)$ with respect to the inner product 
$(-1)^{p_2}\sigma_t(\delta^{2p_2+1}x,x)$. So, we defined $E_a(t)$
for all $a$ not located to the right of $\rho_2$ in $m(\Delta)$. 

In the same way,  by applying $\delta$ and completing the constructed tuples to orthonormal bases  of the corresponding $V_j(t)$ with respect to the corresponding inner product, we can define $E_a$ for all $a\in m(\Delta)$ such  that $\delta_t E_a(t)$ 
is either equal to $E_{r(a)}(t)$ or is subtuple of $E_{r(a)}(t)$. Further, for the box $a$ in the $j$th column of the diagram 
$\Delta$ set $E_a(t):=(-1)^{j-1} \delta_t^{2j-1}E_{m(a)}(t)$. 
Then again from \eqref{oms} it  follows that $\{E_a(t)\}_{a\in \widetilde \Delta}$ is a Darboux frame of $\mathrm {gr} V(t)$. Moreover, by constructions $\delta_t$ acts on the basis 
$\{E_a(t)\}_{\widetilde D}$ as in \eqref{normformdelta}, which proves the statement.
\end{proof}

\subsection{Calculation of the universal algebraic prolongation of $\delta$}
We assume that $\delta$ has the form \eqref{normformdelta} in a Darboux basis $\{E_a\}_{a\in \widetilde\Delta}$.
First, let us calculate the commutator of $\delta$ with $X\in \mathfrak {sp} (V)$ in the Darboux basis $\{E_a\}_{a\in \widetilde\Delta}$. We need it both for calculation of the algebraic prolongation and for the choice of the normalization conditions. 
\begin{lem}
	\label{comlem}
	If 
	\begin{equation}
	\label{delX}
	[\delta, X](E_a)=\sum_{b\in\widetilde\Delta} E_b Y_{ba},
	\end{equation}
	 then
	\begin{equation}
	\label{commeq}
	Y_{ba}=\varepsilon \bigl(l(b)\bigr) X_{l(b)a}- \varepsilon (a)X_{b\, r(a)},
	\end{equation}
	where the terms involving non-existing boxes of the diagram $\widetilde \Delta$ are considered to be equal to zero. 
\end{lem}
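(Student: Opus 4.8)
The plan is to compute $[\delta,X]$ directly from the definitions of $\delta$ and of a general symplectic endomorphism $X$ in the Darboux basis $\{E_a\}_{a\in\widetilde\Delta}$, using the matrix-coefficient conventions introduced before the statement (namely $X(E_a)=\sum_b E_b X_{ba}$ and $\delta(E_a)=\varepsilon(a)E_{r(a)}$). Since $[\delta,X]=\delta X-X\delta$ as operators, I would evaluate each of the two terms on a basis tuple $E_a$ and then read off the coefficient $Y_{ba}$ of $E_b$ in the result.

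First I would compute $X\delta(E_a)$. By \eqref{normformdelta}, $\delta(E_a)=\varepsilon(a)E_{r(a)}$, so applying $X$ gives $X\delta(E_a)=\varepsilon(a)\sum_{b}E_b X_{b\,r(a)}$, whose $E_b$-coefficient is $\varepsilon(a)X_{b\,r(a)}$. If $r(a)$ is a non-existing box (i.e. $a$ is the rightmost box of its row), then $\delta(E_a)=0$ and this term vanishes, consistent with the convention in the statement. Next I would compute $\delta X(E_a)=\delta\bigl(\sum_c E_c X_{ca}\bigr)=\sum_c \varepsilon(c)E_{r(c)} X_{ca}$. To extract the coefficient of $E_b$ I reindex by setting $b=r(c)$, i.e. $c=l(b)$, which gives the $E_b$-coefficient $\varepsilon\bigl(l(b)\bigr)X_{l(b)\,a}$; again, when $l(b)$ does not exist (i.e. $b$ is leftmost in its row) there is no $c$ with $r(c)=b$ and this term is zero. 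Subtracting, the $E_b$-coefficient of $[\delta,X](E_a)=\delta X(E_a)-X\delta(E_a)$ is exactly $\varepsilon\bigl(l(b)\bigr)X_{l(b)\,a}-\varepsilon(a)X_{b\,r(a)}$, which is \eqref{commeq}.

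The only genuine subtlety, and the step I would be most careful about, is the bookkeeping of shifts and the boundary boxes: the right shift $r$ and left shift $l$ are only partially defined on $\widetilde\Delta$, so the reindexing $c\mapsto r(c)$ is a bijection only away from the edges of the rows, and I must verify that the stated zero-convention for non-existing boxes is precisely what makes the single formula \eqref{commeq} hold uniformly at interior and boundary boxes alike. I would check that $\delta$ genuinely maps the tuple in box $a$ into box $r(a)$ within the same row of $\widetilde\Delta$ (so that the degree drops by one, consistent with $\delta\in\mathfrak g_{-1}$), and that the two terms in \eqref{commeq} individually respect this row-structure. No use of the symplectic relations \eqref{ab1}–\eqref{ab2} is needed here, since the commutator identity is a purely linear-algebraic consequence of how $\delta$ permutes the basis; those relations would only enter if one wished to re-express $Y_{ba}$ in terms of the independent blocks of $X$, which is not what the lemma asks.
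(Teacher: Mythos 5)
Your computation is correct and is essentially identical to the paper's own proof: both expand $[\delta,X](E_a)=\delta X(E_a)-X\delta(E_a)$ in the Darboux basis and reindex $c\mapsto r(c)$ to read off the coefficient of $E_b$. Your extra care about the boundary boxes simply makes explicit the zero-convention already stated in the lemma.
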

\begin{proof}
	Using \eqref{normformdelta} and \eqref{matrixrep}, we have
	
	$$[\delta, X](E_a)=\delta\circ X(E_a)-X\delta(A)=
	\delta\Bigl(\sum_{b\in\widetilde\Delta} E_b X_{ba}\Bigr)-\varepsilon(a) X(E_{r(a)})=$$
	$$\sum_{b\in\widetilde\Delta} \varepsilon (b)E_{r(b)} X_{ba}-\varepsilon (a)\sum_{b\in  \widetilde \Delta} E_b X_{b\, r(a)}=\sum_{b\in  \widetilde \Delta}E_b Y_{ba}$$
		with $Y_{ba}$ as in \eqref{commeq}, which completes the proof of the lemma.
\end{proof}
\begin{prop}
	\label{prolongprop}
	The following holds:
	\begin{equation}
	\label{prolongeq}
	\mathfrak u(\delta)=\mathfrak u_0(\delta)\cong\bigoplus_{i=1}^s \mathfrak {so}_{r_i} 
	\end{equation}
	In more detail, $\mathfrak u(\delta)$ consists of all $X\in \mathfrak g^0$ such that if $X$ is represented in the Darboux frame  $\{E_a\}_{a\in \widetilde\Delta}$ by \eqref{matrixrep}, then the only possibly nonzero matrices $X_{ba}$ are when $a=b$ and in this case each $X_{aa}$ is skew-symmetric and $X_{l(a) l(a)}=X_{(a,a)}$ for any box $a \in \widetilde \Delta$, which is not the first box of a row.
\end{prop}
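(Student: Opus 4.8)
The plan is to recognize $\delta$ as the lowering operator of an $\mathfrak{sl}_2$-triple whose semisimple element induces the given grading, turning the vanishing of the higher prolongations into elementary $\mathfrak{sl}_2$-weight theory, while reading the explicit description of $\mathfrak u_0(\delta)$ off the commutator formula of Lemma \ref{comlem}. To set up the structure, let $h$ be the degree operator acting on $V_j$ as multiplication by $2j-1$; using that the induced symplectic pairing couples $V_j$ with $V_{\tilde\jmath}$, $j+\tilde\jmath=1$, one checks $h\in\mathfrak{sp}(V)=\mathfrak g$, and then $\mathrm{ad}\,h$ acts on $\mathfrak g_k$ as multiplication by $2k$. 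The content of Lemma \ref{classlem} — that $\delta^{2j+1}\colon V_j\to V_{-j-1}$ is an isomorphism — is precisely that the grading is the one attached to $\delta$ by an $\mathfrak{sl}_2$-triple $(\delta,h,\delta^+)$ with $\delta^+\in\mathfrak g_1$; concretely $\delta^+$ is the left shift $E_a\mapsto c_aE_{l(a)}$ on the Darboux basis, with standard ladder coefficients rescaled to absorb the signs $\varepsilon(a)$ of \eqref{epsilon}, and its bracket relations are checked by a computation analogous to Lemma \ref{comlem}. Thus the adjoint action makes $\mathfrak g$ a finite-dimensional $\mathfrak{sl}_2$-module with $f:=\mathrm{ad}\,\delta$ the lowering operator and $\mathfrak g_k$ the weight-$2k$ space.

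Next I would unwind the recursion \eqref{0prolong}--\eqref{kprolong} to the closed form $\mathfrak u_k(\delta)=\{x\in\mathfrak g_k:(\mathrm{ad}\,\delta)^{k+1}x=0\}$, which is a one-line induction on $k$. For $k\ge 1$ this forces $x=0$: a nonzero $x\in\mathfrak g_k$ has weight $2k\ge 2$, hence lies in $\mathfrak{sl}_2$-irreducible summands of highest weight $m\ge 2k\ge 2$, and on each such summand $f^{k+1}$ carries the weight-$2k$ line onto the weight-$(-2)$ line, which is nonzero because $-2\ge -m$; thus $f^{k+1}$ is injective on $\mathfrak g_k$ and $\mathfrak u_k(\delta)=0$. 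This already yields $\mathfrak u(\delta)=\mathfrak u_0(\delta)$.

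It remains to compute $\mathfrak u_0(\delta)=\{X\in\mathfrak g_0:[\delta,X]=0\}$ and to match it with the explicit description. Setting $b=r(a')$ in the vanishing $Y_{ba}=0$ of \eqref{commeq} yields the propagation identity $X_{r(a')\,r(a)}=\bigl(\varepsilon(a')/\varepsilon(a)\bigr)X_{a'a}$; since $X$ has degree $0$ a nonzero block $X_{a'a}$ forces $a,a'$ into the same column, where $\varepsilon(a)=\varepsilon(a')$, so the blocks are constant under simultaneous right shift along the rows. Evaluating \eqref{commeq} at the leftmost box of a shorter row (where $l(b)$ does not exist) kills every block between boxes of rows of different lengths, and the symplectic relation \eqref{abgen} propagates this vanishing to the transposed blocks; hence $X$ is block-diagonal with $X_{aa}$ constant along each row $i$ of $\widetilde\Delta$ and a priori in $\mathfrak{gl}_{r_i}$. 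Finally \eqref{abgen} with $a=b$ together with the constancy $X_{aa}=X_{m(a)\,m(a)}$ gives $X_{aa}=-X_{aa}^T$, so each row contributes an element of $\mathfrak{so}_{r_i}$. This is exactly the description in the statement and gives $\mathfrak u_0(\delta)\cong\bigoplus_{i=1}^s\mathfrak{so}_{r_i}$.

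The main obstacle is the first step: certifying that $\delta$ sits in an $\mathfrak{sl}_2$-triple whose semisimple part is the grading element, i.e. normalizing the coefficients $c_a$ of $\delta^+$ so that $(\delta,h,\delta^+)$ really satisfies the $\mathfrak{sl}_2$-relations inside $\mathfrak{sp}(V)$ in the presence of the signs $\varepsilon(a)$; once this is secured, the weight argument for $\mathfrak u_k(\delta)=0$, $k\ge 1$, is immediate. The remaining bookkeeping in the last step — tracking the boundary equations at the ends of rows of unequal length and extracting skew-symmetry from the mirror symmetry $m$ — is routine, but must be handled with care because of the multiplicities $r_i$.
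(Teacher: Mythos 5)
Your proposal is correct in substance but takes a genuinely different route from the paper on the key point, the vanishing of the positive prolongations. The paper never introduces an $\mathfrak{sl}_2$-triple: it first computes the full centralizer of $\delta$ in $\mathfrak g^0$ (Lemma \ref{commlem}), concluding that any $X\in\mathfrak g^0$ commuting with $\delta$ is block-diagonal and hence of degree $0$ (which disposes of $\mathfrak u_k$ for $k\geq 2$), and then kills $\mathfrak u_1$ by a bare-hands recursion with the commutator formula \eqref{commeq}, arriving at $X_{l^j(\rho_i)l^{j-1}(\rho_i)}=j\,\varepsilon\bigl(l^j(\rho_i)\bigr)Y_{\rho_i\rho_i}$ and then observing that at the first box of a row the relation \eqref{ab1} forces this block to be simultaneously symmetric and skew-symmetric, hence zero. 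Your closed form $\mathfrak u_k(\delta)=\ker\bigl((\mathrm{ad}\,\delta)^{k+1}|_{\mathfrak g_k}\bigr)$ and the weight argument are correct and treat all $k\geq 1$ uniformly; the ladder coefficients $j$ in the paper's recursion are exactly the shadow of your raising operator, and your viewpoint connects nicely with the paper's remark that $\mathrm{sym}_{F(\delta)}/\mathrm{sym}^{\mathrm{ne}}_{F(\delta)}\cong\mathfrak{sl}_2$. The cost is the verification you rightly flag as the main obstacle: that $\delta$ completes to a triple $(\delta,h,\delta^+)$ inside $\mathfrak{sp}(V)$ with $h$ the grading element. This does go through, and there is a clean way to see that $\delta^+\in\mathfrak{sp}(V)$ without chasing signs: on each row of $\widetilde\Delta$ the symplectic form is the tensor product of a symplectic form on the length-$2p_i$ string with the Euclidean form on $\mathbb R^{r_i}$, and the invariant bilinear form of an irreducible $\mathfrak{sl}_2$-module is unique up to scale, so any raising operator completing $(\delta,h)$ automatically preserves the form. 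One constant does need fixing, however: for the pairing consistent with \eqref{skewsymm}, which couples $V_j$ with $V_{-1-j}$ (in accordance with $\dim V_j=\dim V_{-j-1}$), the grading element must act on $V_j$ by $2j+1$, not $2j-1$; with your normalization the weights along a string of length $2p_i$ would run from $-2p_i-1$ to $2p_i-3$ and would fail to be symmetric about zero, so no triple would exist. Finally, your computation of $\mathfrak u_0$ is essentially the paper's, run from the left ends of the rows rather than the right and using the mirror symmetry \eqref{abgen} to reach the blocks whose second index lies in a shorter row; that part is routine and correct.
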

\begin{proof}
First let us  describe all $X$ in $\mathfrak g^0$ (and not necessarily in $\mathfrak g_0$) that commute with $\delta$. This will allow us to calculate $\mathfrak u_0$ and will be used to prove that $\mathfrak u_i=0$ for $i>0$.

\begin{lem} 
\label{commlem}	
If $X\in \mathfrak g^0$ and $[\delta, X]=0$, then 
the only possibly nonzero matrices $X_{ba}$ in the representation \eqref{matrixrep} are when $a=b$ and in this case each $X_{aa}$ is skew-symmetric and $X_{l(a) l(a)}=X_{(a,a)}$ for any box $a \in \widetilde \Delta$, which is not the first box of a row.
\end{lem}	
\begin{proof}	
Let, as before, $\rho_i$ be the last (i.e. the most right) box in the $i$th row of $\widetilde\Delta$. Since $r(\sigma)$ does not exist, by \eqref{commeq} we get that  the condition $Y_{b\rho_i}=0$ implies that 
\begin{equation}
\label{lbsigma}
X_{l(b)\rho_i}=0.
\end{equation}
This means that $X_{b\rho_i} =0$ for all $b\neq \rho_j$. Since $X\in\mathfrak g^0$, $X_{\rho_j \rho_i}=0$ if $i<j$.  Also, if $j>i$, then  \eqref{ab2} implies that $X_{\rho_j \rho_i}=-X_{m(\rho_i) m(\rho_j)}^T$ and the latter is $0$ from the condition $X\in\mathfrak g^0$ again. 
So, we got that
\begin{equation}
\label{bsigma}
X_{b\rho_i} =0, \quad b\neq \rho_i.
\end{equation}
	
Using relations \eqref{lbsigma}  and  \eqref{commeq}  we get  that the condition $Y_{l(b) l(\rho_i)}
=0$ implies that
$$X_{l^2(b)l(\rho_i)}=0.$$
In the same way, by induction we will get that if $[\delta, X]=0$, then 
$$X_{l^{i+1}(b)l^i(\rho_i)}=0,$$
which together with \eqref{bsigma} yields
$$X_{ba}=0, \quad b\neq a.$$

Now  treat the case $b=a$. From  \eqref{commeq} the condition $Y_{r(a)a}=0$, where $a$ is not the last box of a row $\widetilde \Delta$, is equivalent to 
$X_{aa}=X_{r(a) r(a)}, $ which implies that $X_{aa}=X_{bb}$ if boxes $a$ and $b$ lie in the same row of $\widetilde \Delta$.
Since boxes $a$ and $m(a)$ lie in the same row, we get from this and \eqref{ab2} that 
$$X_{aa}=X_{m(a)m(a)}=-X_{aa}^T,$$
i.e. $X_{aa}$ is skew-symmetric, which completes the proof of the lemma.
\end{proof}

From the previous lemma we get that $\mathfrak u_0$ consists of all $X\in \mathfrak g_0$ such that  the only possibly nonzero matrices $X_{ba}$ in the representation \eqref{matrixrep} are when $a=b$ and in this case each $X_{aa}$ is skew-symmetric and $X_{l(a) l(a)}=X_{(a,a)}$ for any box $a \in \widetilde \Delta$, which is not the first box of a row.

Also, from the previous lemma it follows that  in order to complete the proof of the proposition it is enough to prove that $\mathfrak u_1=0$, because there is no nontrivial elements of degree $\geq 2$ in $\mathfrak g$ , which commute with $\delta$.  

To calculate $\mathfrak u_1$,  let $Y_{ba}$ be as in \eqref{delX}. If  $\rho_i$ is  the last box of the $i$ row  of $\widetilde \Delta$  and $[\delta,X]\in\mathfrak u_0$, then  $Y_{b\rho_i}=0$ form $b\neq \rho_i$. Hence by \eqref{commeq} we have \eqref{lbsigma} for $b\neq \rho_i$ and by exactly the same arguments as in Lemma \ref{comlem} and the assumption that $X\in \mathfrak g_1$ one gets that 
\begin{equation}
\label{bnotla}
X_{ba}=0, \quad b\neq l(a).
\end{equation}

Now, applying \eqref{commeq} for $b=a$, we get 
\begin{equation}
\label{Caa}
Y_{aa}=\varepsilon \bigl(l(a)\bigr) X_{l(a)a}- \varepsilon (a)X_{a\, r(a)},
\end{equation}
where $Y_{aa}$ is the same skew-symmetric matrix for all $a$ on the same row of $\widetilde \Delta$. For $a=\rho_i$ formula \eqref{Caa} implies 
\begin{equation}
\label{Caa1}
X_{l(\rho_i)\rho_i}=\varepsilon \bigl(l(\rho_i)\bigr)  Y_{\rho_i\rho_i}
\end{equation}

Now use \eqref{Caa} for  $a=l(\rho_i)$
\begin{equation}
\label{Caa2}
Y_{l(\rho_i)l(\rho_i) }=\varepsilon \bigl(l^2(\rho_i)\bigr) X_{l^2(\rho_i) l(\rho_i)}- \varepsilon\bigl(l(\rho_i)\bigr) X_{l(\rho_i) \rho_i}
\end{equation}
Substituting \eqref{Caa1} into \eqref{Caa2} and using that $Y_{l(\rho_i)l(\rho_i) }=Y_{\rho_i\rho_i}$, we get 
\begin{equation*}
X_{l^2(\rho_i)l(\rho_i)}=2\varepsilon \bigl(l^2(\rho_i)\bigr)  Y_{\rho_i\rho_i}.
\end{equation*}
Continuing by induction we get 
\begin{equation}
\label{Caai}
X_{l^j(\rho_i)l^{j-1}](\rho_i)}=j\varepsilon \bigl(l^j(\rho_i)\bigr)  Y_{\rho_i\rho_i},
\end{equation}
which implies that for every  $a$ in the $i$th row  of $\widetilde\Delta$ the  matrix $X_{l(a) a}$ is a nonzero multiple of the same skew-symmetric matrix $Y_{\rho_i\rho_i}$.  

Now  assume that $a_i$ is the first box in the $i$th row of $\Delta$. Then $l(a_i)=m(a_i)$ hence by \eqref{ab1} we have 
$$X_{l(a_i) a_i}=X_{m(a_i) a_i}=X_{m(a_i) a_i}^T=X_{l(a_i) a_i}^T,$$
i.e. $X_{l(a_i) a_i}$ is simultaneously symmetric and skew-symmetric and hence it is equal to zero. This implies that $X_{l(a) a}=0$, which together with \eqref{bnotla} yields that $X=0$. So, we proved that $\mathfrak u_1=0$ and hence $u_i=0$ for all $i\geq 2$. The proof of the proposition is completed.
\end{proof}
 
 \begin{rem}
 	\label{Orem}
 As a matter of fact, $\mathfrak u_0(\delta)$ can be found without calculations from the fact that each space $V_j(t)$, $j\geq 0$  
 is endowed with the Euclidean structure given by the quadratic form  $\sigma_t(\delta_t^{2j+1} x,  x)$. Also, from this and the fact that $\mathfrak u(\delta)=\mathfrak u_0(\delta)$ it is obvious
 that $\mathrm{Sym}^{\mathrm{ne}}_{F(\delta)}\cong O_{r_1}\times \ldots\times O_{r_s}$.  The adjoint action of this group on $\mathfrak g$ can be described as follows: If $U=(U_1, \ldots, U_s)$, where $U_i\in O_{r_i}$ and $X\in\mathfrak g$, then 
 \begin{equation}
 \label{ADact}
 (\mathrm{Ad}\, U X)_{ba}=U_j X_{ba}  U_i^{-1},
 \end{equation}
 where $a$ and $b$ are in the $i$th and $j$th rows of $\widetilde \Delta$, respectively.
 \end{rem}

\subsection {Calculation of $[\delta, \mathfrak g^0]$}
Before choosing the normalization condition, i.e. a graded subspace complementary to  $\mathfrak u(\delta)+[\delta, \mathfrak g^0]\cap\mathfrak g^0$  in $\mathfrak g^0$, we have to describe the space  $[\delta, \mathfrak g^0]\cap \mathfrak g^0$. The following notation will be useful for this purpose. Given $Y\in \mathfrak g^0$, which has the form 
$Y=\displaystyle{\sum_{b\in\widetilde\Delta}} E_b Y_{ba}$, let 

\begin{equation}
\label{ddef}
D(Y)_{ba}:=Y_{ba}+\cfrac{\varepsilon\bigl(l(b)\bigr)}{\varepsilon\bigl(l(a)\bigr)} Y_{l(b) l(a)}+\cfrac{\varepsilon\bigl(l(b)\bigr)}{\varepsilon\bigl(l(a)\bigr)}\cfrac{\varepsilon\bigl(l^2(b)\bigr)}
{\varepsilon\bigl(l^2(a)\bigr)}Y_{l^2(b) l^2(a)}+\ldots=
	\sum_{j\geq 0}\left(\prod_{s=1}^{j}\cfrac{\varepsilon\bigl(l^s(b)\bigr)}{\varepsilon\bigl(l^s(a)\bigr)} \right)Y_{l^j(b) l^j(a),}
\end{equation}
where the sum is finite as we reach the first box of a row after finite number of applications of $l$.

\begin{prop}
	\label{coboundimprop}
$Y\in [\delta, \mathfrak g^0]\cap \mathfrak g^0$ if and only if 
for every last box $\rho$ of the diagram $\widetilde \Delta$ and every box $b\in \widetilde \Delta$ that is not higher than  $\rho$ in $\widetilde \Delta$ the following identity holds

\begin{equation}
\label{coboundeq}
D(Y)_{b\rho}=0.
\end{equation}

\end{prop}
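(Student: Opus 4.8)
The plan is to read the membership $Y\in[\delta,\mathfrak g^0]\cap\mathfrak g^0$ as the solvability of the linear system $Y=[\delta,X]$ with $X\in\mathfrak g^0$, written in components through \eqref{commeq}, and to recognize the operator $D$ of \eqref{ddef} as the device that telescopes \eqref{commeq} along the leftward chains $a\mapsto l(a)$. Throughout I identify a box with its position in its row of $\widetilde\Delta$, so that (because $\delta$ has degree $-1$ and acts by \eqref{normformdelta}) the shift $r$ lowers and $l$ raises the grading degree by one; I write $L_a$ for the number of applications of $l$ needed to carry $a$ to the first box of its row. Since the rows of $\Delta$ have strictly decreasing lengths, ``$b$ not higher than $\rho$'' means $\mathrm{row}(b)\ge\mathrm{row}(\rho)$ and hence $L_b\le L_\rho$.

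For the necessity (``only if'') I would substitute $Y=[\delta,X]$ into \eqref{ddef}. Setting $c_j=\prod_{s=1}^{j}\varepsilon\bigl(l^s(b)\bigr)/\varepsilon\bigl(l^s(\rho)\bigr)$ and using \eqref{commeq} with $a=l^j(\rho)$ (so that $r(a)=l^{j-1}(\rho)$), each summand $c_jY_{l^j(b)\,l^j(\rho)}$ splits into an upper piece $c_j\varepsilon\bigl(l^{j+1}(b)\bigr)X_{l^{j+1}(b)\,l^j(\rho)}$ and a lower piece $-c_j\varepsilon\bigl(l^j(\rho)\bigr)X_{l^j(b)\,l^{j-1}(\rho)}$. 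The identity $c_{j+1}\varepsilon\bigl(l^{j+1}(\rho)\bigr)=c_j\varepsilon\bigl(l^{j+1}(b)\bigr)$ matches the upper piece of the $j$th term with the lower piece of the $(j+1)$st, so the sum telescopes and only the top upper piece $c_J\varepsilon\bigl(l^{J+1}(b)\bigr)X_{l^{J+1}(b)\,l^J(\rho)}$, with $J=\min(L_b,L_\rho)$, survives. Because $b$ is not higher than $\rho$ we have $J=L_b$, so $l^{J+1}(b)$ is not a box of $\widetilde\Delta$ and the term vanishes; hence $D(Y)_{b\rho}=0$, proving necessity.

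For the sufficiency (``if'') I would reconstruct $X$ from $Y$ by inverting \eqref{commeq}. Reading \eqref{commeq} as $X_{l(b)\,a}=\varepsilon\bigl(l(b)\bigr)\bigl(Y_{ba}+\varepsilon(a)X_{b\,r(a)}\bigr)$ determines the entries of $X$ column by column, proceeding from the last boxes of each row leftward in the second index; at a last box $r(a)$ is absent, giving the base case $X_{l(b)\rho}=\varepsilon\bigl(l(b)\bigr)Y_{b\rho}$. A degree count shows that all negative-degree entries produced this way vanish (since $Y\in\mathfrak g^0$), so the reconstructed $X$ lies in $\mathfrak g^0$, and it is determined up to the freedom $\ker(\mathrm{ad}\,\delta)\cap\mathfrak g^0=\mathfrak u_0(\delta)$ coming from Lemma \ref{commlem} and Proposition \ref{prolongprop}. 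The recursion genuinely defines $X_{\beta a}$ only when $\beta$ is not the first box of its row; the equations \eqref{commeq} in which $b$ \emph{is} a first box are therefore not definitions but compatibility constraints, and unfolding such a constraint by the same recursion turns it, through the telescoping above run in reverse, into an equation of the form $D(Y)_{b'\rho}=0$. Finally, since $\delta$ and $Y$ lie in $\mathfrak{sp}(V)$ and $\mathrm{ad}\,\delta$ preserves the splitting of $\mathfrak{gl}(V)$ into its symplectic part and the complementary \eqref{abgen}-symmetric part, replacing $X$ by its symplectic component yields a preimage inside $\mathfrak g^0$.

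The main obstacle is the bookkeeping in the sufficiency step: one must verify that \emph{every} first-box compatibility constraint is covered by the hypothesis. Unfolding produces conditions $D(Y)_{b'\rho}=0$ for all $b'$, including some with $b'$ higher than $\rho$ that are not on the listed set, and the point is to use the symplectic symmetry \eqref{abgen} to identify each such unlisted constraint with a listed one (with the first index not higher than $\rho$). A cleaner alternative that avoids tracking individual constraints is a dimension count: necessity gives the inclusion $[\delta,\mathfrak g^0]\cap\mathfrak g^0\subseteq\{Y:D(Y)_{b\rho}=0\}$; the injectivity of $\mathrm{ad}\,\delta$ on $\bigoplus_{k\ge 1}\mathfrak g_k$ (Proposition \ref{prolongprop}) gives $\dim\bigl([\delta,\mathfrak g^0]\cap\mathfrak g^0\bigr)=\dim\mathfrak g^1$; and it then remains to check that the listed conditions, after removing the redundancy forced by \eqref{abgen}, cut out a subspace of the same codimension $\dim\mathfrak g_0$ in $\mathfrak g^0$. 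Matching these two counts is the crux.
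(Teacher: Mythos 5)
Your proposal is correct and follows essentially the same route as the paper: necessity via telescoping \eqref{commeq} along the left-chains (the paper's chain \eqref{chainY}, collapsed to $\varepsilon(a)X_{b\,r(a)}=-D(Y)_{ba}$ and then specialized to $a=\rho$), and sufficiency by reconstructing $X$ from $Y$ by inverting that same recursion subject to \eqref{abgen}. The compatibility bookkeeping you single out as the crux is precisely the step the paper itself dispatches with ``it can be shown''; your projection onto the $\mathfrak{sp}(V)$-component and the dimension count based on the injectivity of $\mathrm{ad}\,\delta$ on $\mathfrak g^1$ (from Proposition \ref{prolongprop}) are both legitimate ways to close it.
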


\begin{proof}
Let $(a,b)$ be a pair of boxes of $\widetilde\Delta$ such that $b$  is not to the right and not higher than  $a$. Note that from \eqref{abgen} an element $X\in\mathfrak g^0$ is determined uniquely from the knowledge  of  $X_{ba}$ for all such pairs.
	
If $Y=[\delta, X]$ for some $X\in \mathfrak g^0$, then 	
applying  \eqref{commeq} to pairs of boxes $(b,a)$, $\bigr(l(a), l(b)\bigl)$ ,$\bigr(l^2(a), l^2(b)\bigl), \ldots$   
we get the following  chain of identities:

\begin{equation}
\label{chainY}
\begin{split}
~& 	Y_{ba}=\varepsilon \bigl(l(b)\bigr) X_{l(b)a}- \varepsilon (a)X_{b\, r(a)},\\
~&      Y_{l^2(b)l^2(a)}=\varepsilon \bigl(l^2(b)\bigr) X_{l^2(b)l(a)}- \varepsilon \bigl(l(a)\bigr)X_{l(b)\, a)},\\
~&      Y_{l^3(b)l^3(a)}=\varepsilon \bigl(l^3(b)\bigr) X_{l^2(b)l(a)}- \varepsilon \bigl(l^2(a)\bigr)X_{l^2(b)\, l(a)},\\
~&\vdots\\
 ~&Y_{l^{j-1}(b)l^{j-1}(a)}=\varepsilon \bigl(l^j(b)\bigr) X_{l^{j}(b)l^{j-1}(a)}- \varepsilon \bigl(l^{j-1}(a)\bigr)X_{l^{j-1}(b)\, l^{j-2}(a)},\\
  ~&Y_{l^j(b)l^j(a)}=- \varepsilon \bigl(l^{j}(a)\bigr)X_{l^{j}(b)\, l^{j-1}(a)},
 \end{split} 
  \end{equation}
where $j$ is such that $l^j(b)$ is the first box in the corresponding row of $\widetilde \Delta$. Note that by the assumption on the location of $b$ with respect to $a$ all indices appearing in \eqref{chainY}, except maybe $r(a)$, are well defined. Eliminating  $X_{l(b)a}$ by taking an appropriate linear combination of the first identities in \eqref{chainY}, then eliminating $X_{X_{l^2(b)l(a)}}$ from the resulting combination by adding the third identity of \eqref{chainY}, and continuing this successive eliminating procedure 
we get that 
\begin{equation}
\label{DXY}
\varepsilon(a) X_{b r(a)}=-D(Y)_{ba}
\end{equation}
This implies \eqref{coboundeq} in the case when $a=\rho$, i.e.  the last box of a row in $\widetilde \Delta$, which proves necessity of \eqref{coboundeq}.

To prove  sufficiency, given $Y\in \mathfrak g^0$, satisfying \eqref{coboundeq}, define $X$ such that it satisfies \eqref{DXY} for all pair $(a,b)$ , where $a$ is not the last box in a row and  $b$  is not to the right and not higher than  $r(a)$, and also such that \eqref{abgen} holds. It can be shown  that conditions  \eqref{DXY} and \eqref{abgen} are consistent in the case when $a$ and $b$ lie in the same row, so such $X$ indeed can be constructed and $X\in \mathfrak g^0$. Moreover, by reversion of the procedure of going from \eqref{chainY} to \eqref{DXY} we can show that $[\delta, X]=Y$, which completes the proof of sufficiency.
\end{proof}

Now in order to choose a normalization condition the following lemma is useful:

\begin{lem}
	\label{complementlem}
	Assume that $\rho$ is the last box of a row of $\widetilde \Delta$ and $b$ be the $k$th box in the same row (from the left). Then if $Y\in \mathfrak g^0$ the matrix $D(Y)_{b\rho}$ is symmetric if $k$ is odd and skew-symmetric if $k$ is even. 
	\end{lem}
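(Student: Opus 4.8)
The plan is to fix coordinates on the common row of $\widetilde\Delta$ and reduce the entire statement to a single sign computation.

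First I would parametrize that row: if it has $2p$ boxes, label them $c_1,\dots,c_{2p}$ from left to right, so that $c_1,\dots,c_p\in m(\Delta)$ carry $\varepsilon=+1$ and $c_{p+1},\dots,c_{2p}\in\Delta$ carry $\varepsilon=-1$. Then $\rho=c_{2p}$, $b=c_k$, the left shift is $l(c_n)=c_{n-1}$, and the mirror reflection is $m(c_n)=c_{2p+1-n}$. Two structural facts will be used repeatedly: mirroring flips the sign, $\varepsilon(m(a))=-\varepsilon(a)$ (since $m$ interchanges $\Delta$ and $m(\Delta)$), and mirroring intertwines the two shifts, $m(l^j a)=r^j(m(a))$. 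Since $1/\varepsilon=\varepsilon$, the definition \eqref{ddef} then reads $D(Y)_{b\rho}=\sum_{j=0}^{k-1}P_j Z_j$, where $Z_j:=Y_{c_{k-j}\,c_{2p-j}}$, $\eta_s:=\varepsilon(c_{k-s})\varepsilon(c_{2p-s})$, and $P_j:=\prod_{s=1}^{j}\eta_s$; the sum stops at $j=k-1$ because $b=c_k$ reaches the start of the row before $\rho$ does.

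Next I would transpose termwise. Transposing \eqref{abgen} gives, for $Y\in\mathfrak g^0\subset\mathfrak{sp}(V)$, the identity $Y^T_{ab}=-\varepsilon(a)\varepsilon(b)Y_{m(b)m(a)}$. Applying it to $Z_j$ and using $m(c_{2p-j})=c_{j+1}$, $m(c_{k-j})=c_{2p+1-k+j}$, one obtains the clean relation $Z_j^T=-\eta_j Z_{k-1-j}$. Hence $D(Y)^T_{b\rho}=-\sum_{j=0}^{k-1}P_j\eta_j\,Z_{k-1-j}$; that is, the transpose is supported on the very same family of blocks, merely reindexed by $i=k-1-j$. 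It therefore suffices to match coefficients, i.e. to prove $P_j\eta_j=(-1)^{k}P_{k-1-j}$ for every $0\le j\le k-1$, since this yields $D(Y)^T_{b\rho}=(-1)^{k+1}D(Y)_{b\rho}$, which is symmetry when $k$ is odd and skew-symmetry when $k$ is even.

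The main obstacle, and the only genuine computation, is this coefficient identity; everything else is formal. I would establish it by isolating a palindrome symmetry of the scalars $\eta_s$: the boxes feeding $\eta_{k-1-s}$ are exactly the mirror images of those feeding $\eta_s$, so two applications of $\varepsilon(m(\cdot))=-\varepsilon(\cdot)$ cancel and give $\eta_s=\eta_{k-1-s}$. Reindexing the second factor of $P_iP_{k-2-i}$ by this palindrome collapses it to $P_iP_{k-2-i}=P_{k-2}$. Finally I would compute $P_{k-2}$ directly: writing it as $\prod_{n=2}^{k-1}\varepsilon(c_n)\cdot\prod_{n=2p-k+2}^{2p-1}\varepsilon(c_n)$ and reflecting the second product through $m$ turns it into $(-1)^{k-2}\prod_{n=2}^{k-1}\varepsilon(c_n)$, so the repeated product squares away and $P_{k-2}=(-1)^{k-2}=(-1)^k$. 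Since $P_j\eta_j=P_{j-1}$ for $j\ge1$, the needed identity for $j\ge1$ reduces to $P_{j-1}P_{k-1-j}=(-1)^k$, i.e. exactly $P_iP_{k-2-i}=(-1)^k$ with $i=j-1$; the boundary term $j=0$ is handled the same way using $\eta_0=\eta_{k-1}$ and $P_{k-1}=P_{k-2}\eta_{k-1}$. This closes the argument, the palindrome-plus-reflection bookkeeping being the crux.
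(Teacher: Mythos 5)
Your proof is correct and rests on the same mechanism as the paper's: the transpose of \eqref{abgen} pairs the $j$th and $(k-1-j)$th terms of $D(Y)_{b\rho}$ (this is precisely the relation \eqref{rowinv} in the paper's argument), and everything reduces to checking that the accumulated sign factors match up to $(-1)^k$. The only difference is organizational: your palindrome identity $\eta_s=\eta_{k-1-s}$ together with the reflection computation $P_{k-2}=(-1)^k$ treats all positions of $b$ uniformly, whereas the paper verifies the same sign bookkeeping by splitting into the cases $k\le p$ and $k>p$ (with further subcases); the two computations are equivalent, and yours is arguably tidier.
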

\begin{proof}
	In the considered case in the  sum \eqref{ddef} defining $D(Y)_{b\rho}$ the terms are subdivided into pairs satisfying relation \eqref{abgen}. Indeed, for any $j$, $0\leq j\leq k$ it is easy to show that 
	
	$$m\bigl(l^j(b)\bigr)=l^{k-1-j}(\rho), \quad m\bigl(l^j(\rho)\bigr)=l^{k-1-j}(b).$$
	Hence, by \eqref{abgen},
	
	\begin{equation}
	\label{rowinv}
	Y_{l^j(b) l^j(\rho)}=-\varepsilon\bigl(l^j(b)\bigr)\varepsilon \bigl(l^j(\rho)\bigr)\bigl(Y_{l^{k-1-j}(b) l^{k-1-j}(\rho)}\bigr)^T
	\end{equation}
	
Assume that the number of boxes in the considered row of $\widetilde D$ is equal to $2p$ Consider the following two cases separately

{\it  Case 1.} Assume that $k\leq p$. Then for all $j$, $0\leq j\leq k-1$, 
\begin{equation}
\label{case11}
l^j(b)\in m(\Delta), \quad l^j(\rho)\in\Delta
\end{equation}

Hence, $\varepsilon\bigl(l^j(b)\bigr)=-\varepsilon\bigl(l^j(\rho)\bigr)$. Therefore,  
by \eqref{ddef}
\begin{equation}
\label{case12}
D(Y)_{b\rho}=\sum_{j=0}^{k-1}(-1)^j Y_{l^j(b)l^j(\rho)}
\end{equation}
 and by \eqref{rowinv} 
 \begin{equation}
 \label{rowinv1}
 Y_{l^j(b) l^j(\rho)}=\bigr(Y_{l^{k-1-j}(b) l^{k-1-j}(\rho)}\bigl)^T
 \end{equation}
Consequently, 
$$(-1)^j Y_{l^j(b)l^j(\rho)}+(-1)^{k-1-j} Y_{l^{k-1-j}(b)l^{k-1-j}(\rho)}=(-1)^j Y_{l^j(b)l^j(\rho)}+
(-1)^{k-1-j} \bigl(Y_{l^j(b)l^j(\rho)}\bigr)^T$$
and the latter matrix is symmetric if $k$ is odd and skew-symmetric if $k$ is even.
This together with \eqref{case12} implies the statement of the lemma.

{\it  Case 2.} Assume that $k> p$. 
We have $3$ subcases:
\begin{enumerate}
\item $k-p\leq j\leq p-1$;
\item $0\leq j<k-p$;
\item $p\leq j< k$.
\end{enumerate}
In subcase (1) \eqref{case11} holds, 
In subcase (2) $l^j(b)\in m(\Delta)$ and  $l^j(\rho)\in m(\Delta)$ and  in subcase (3) $l^j(b)\in \Delta$ and  $l^j(\rho)\in \Delta$.
In both of these subcases $\varepsilon\bigl(l^j(b)\bigr)=\varepsilon\bigl(l^j(\rho)\bigr)$. 
Hence, 
by \eqref{ddef}
\begin{equation}
\label{case21}
D(Y)_{b\rho}=\sum_{j=0}^{k-p-1}Y_{l^j(b)l^j(\rho)}+\sum_{j=k-p}^{p-1}(-1)^{j-k+p+1}Y_{l^j(b)l^j(\rho)}+(-1)^{2p-k}\sum_{j=p}^{k-1}Y_{l^j(b)l^j(\rho)}
\end{equation}
and by \eqref{rowinv}
\begin{equation}
\label{rowinv3}
Y_{l^j(b) l^j(\rho)}=-\bigr(Y_{l^{k-1-j}(b) l^{k-1-j}(\rho)}\bigl)^T
\end{equation}
The middle sum in \eqref{case21} corresponds to subcase (1) and can be treated as in the previous case.  To treat the other two sums note that by \eqref{rowinv3}
$$ Y_{l^j(b)l^j(\rho)}+(-1)^{k-2p} Y_{l^{k-1-j}(b)l^{k-1-j}(\rho)}=Y_{l^j(b)l^j(\rho)}+
(-1)^{k-1} \bigl(Y_{l^j(b)l^j(\rho)}\bigr)^T.$$
and the latter is symmetric if $k$ is odd and skew-symmetric if $k$ is even.
This implies that the sum of the first and the third sums in \eqref{case21} symmetric if $k$ is odd and skew-symmetric if $k$ is even, which completes the proof of the lemma.
\end{proof}
 
 \subsection{A class of $\mathrm{Ad}$-invariant normalization conditions}
Now we are ready to choose a normalization condition. Of course, this choice is not unique, but Proposition \ref{coboundimprop} and 
Lemma \ref{complementlem} immediately suggest an entire  class of normalization conditions invariant with respect to the adjoint action of the group $\mathrm {Sym}^{\mathrm{ne}}_{F(\delta)}$ (recall that it has the Lie algebra $\mathfrak u(\delta)=\mathfrak u_0(\delta)$).

Let us describe this class of normalization conditions. 
For any last box $\rho$  of a row of $\widetilde \Delta$ and a box $b\neq \rho$ , which is not higher than $\rho$, in the following set of pairs of boxes

\begin{equation}
\label{chainqb}
\{(b, \rho), \bigl(l(b), l(\rho)\bigr), \bigl(l^2(b), l^2(\rho)\bigr), \ldots\}
\end{equation}
we choose exactly one pair of boxes, denoted $\varphi(b,\rho)$.

Let $N_\varphi$ be the subspace of $\mathfrak g^0$ consisting of all $Y$ such that the only possibly nonzero matrices $Y_{ba}$ are one of the following:

\begin{enumerate}
	\item $Y_{\varphi(b,\rho)}$ , if $b$ and $\rho$ are  not in the same row of $\widetilde \Delta$;
	\item $Y_{\varphi(b,\rho)}$ and $Y_{m\bigl(\varphi(b,\rho)\bigr)}$, if  
	$b$ and $\rho$ are in the same row of  $\widetilde \Delta$  and $b\neq \rho$. Moreover, if $b$ is the $k$th box of the row, then $Y_{g\varphi(b,\rho)}$ is symmetric and $Y_{m\bigl(\varphi(b, \rho)\bigr)}= Y_{\varphi(b,\rho)}$, if $k$ is odd,  and
	$Y_{\varphi(b,\rho)}$ is skew-symmetric and $Y_{m\bigl(\varphi(b, \rho)\bigr)}= -Y_{\varphi(b,\rho)}$, if $k$ is even. 
\end{enumerate}	
 As a direct consequence of  Propositions \ref{prolongprop}, \ref{coboundimprop}, and Lemma \ref{complementlem}  we have the following
 
 \begin{thm}
 	\label{Ngthm}
 The subspace $N_\varphi$ corresponding to an assignment $\varphi$ as above is  an invariant  normalization condition  with respect to the adjoint action of the group $\mathrm {Sym}^{\mathrm{ne}}_{F(\delta)}$ on $\mathfrak g_{0}$.
 \end{thm}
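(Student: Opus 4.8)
The plan is to treat the two assertions of the theorem separately---that $N_\varphi$ satisfies the complementarity \eqref{normacond}, and that it is $\mathrm{Ad}\,\mathrm{Sym}^{\mathrm{ne}}_{F(\delta)}$-invariant---and to obtain both by assembling Propositions \ref{prolongprop} and \ref{coboundimprop} with Lemma \ref{complementlem}. I would first use Proposition \ref{prolongprop} to collapse \eqref{normacond} to a single graded statement: since $\mathfrak u(\delta)=\mathfrak u_0(\delta)$ is concentrated in degree zero, one has $\mathfrak u_k(\delta)=0$ for $k\ge 1$, while the degree-$k$ piece of $[\delta,\mathfrak g^0]\cap\mathfrak g^0$ is exactly $[\delta,\mathfrak g_{k+1}]$; hence it suffices to prove that $N_\varphi$ is a graded complement of $\mathfrak u_0(\delta)+\bigl([\delta,\mathfrak g^0]\cap\mathfrak g^0\bigr)$ in $\mathfrak g^0$. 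By Proposition \ref{coboundimprop} the latter summand is the kernel of the degree-preserving linear map $\mathcal D\colon Y\mapsto\bigl(D(Y)_{b\rho}\bigr)$, indexed by the last boxes $\rho$ of $\widetilde\Delta$ and the boxes $b$ not higher than $\rho$. Thus the whole first assertion reduces to showing that $\mathcal D$ restricts to a linear isomorphism from $\mathfrak u_0(\delta)\oplus N_\varphi$ onto $\mathrm{Im}\,\mathcal D$.

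The key bookkeeping device is the chain \eqref{chainqb}: as $j$ increases the pair $(l^j(b),l^j(\rho))$ sweeps out a set of block positions, and distinct chains are disjoint because their second coordinates run through a single row of $\widetilde\Delta$ and are pinned down by the last box $\rho$. By \eqref{ddef} the value $D(Y)_{b\rho}$ is the signed sum of the blocks $Y_{l^j(b)\,l^j(\rho)}$ lying on that chain, so for $Y\in N_\varphi$---whose support meets each chain only at the representative $\varphi(b,\rho)$ together with, in the same-row case, its mirror $m(\varphi(b,\rho))$---the sum $D(Y)_{b\rho}$ collapses to this single datum (one block in the different-row case, one symmetric or skew-symmetric block in the same-row case); this gives injectivity of $\mathcal D|_{N_\varphi}$ at once. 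For surjectivity onto the off-diagonal part of $\mathrm{Im}\,\mathcal D$ one prescribes the desired value by placing it at $\varphi(b,\rho)$; here Lemma \ref{complementlem} is decisive, since it shows that $D(Y)_{b\rho}$ on a same-row pair is forced to be symmetric for an odd-indexed box and skew-symmetric for an even-indexed box---precisely the symmetry type built into clause (2) of the definition of $N_\varphi$---so the representative block can realise every admissible value and no more. The diagonal pairs $b=\rho$ are handled by $\mathfrak u_0(\delta)$: an element of $\mathfrak u_0(\delta)$ is block-diagonal and constant along rows by Proposition \ref{prolongprop}, so its image under $\mathcal D$ is supported exactly on these diagonal pairs and, being a nonzero multiple of the row-constant skew block, meets the diagonal part of $\mathrm{Im}\,\mathcal D$ bijectively. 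As $N_\varphi$ carries no diagonal block ($\varphi(b,\rho)$ is never diagonal because $b\ne\rho$), the two images are transverse and together fill $\mathrm{Im}\,\mathcal D$, yielding the claimed isomorphism and hence \eqref{normacond}.

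The step I expect to demand the most care is the interaction with the symplectic relations \eqref{abgen}, which tie each block $Y_{ab}$ to $Y_{m(b)m(a)}$ and thereby threaten to make the constraints $D(Y)_{b\rho}=0$ redundant in mirror pairs. The point to verify is that the single restriction ``$b$ not higher than $\rho$'' already breaks this redundancy: in the different-row case the mirror of a chain has its $\rho$-box in the opposite row, so exactly one member of each mirror pair satisfies ``not higher than'', and its mirror block---forced by \eqref{abgen}---sits at a position that is itself never a $\varphi$-representative; in the same-row case the mirror of a chain is the chain itself, and \eqref{abgen} combines with clause (2) to pin the free datum down to one symmetric or skew-symmetric block. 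Checking that these sign-and-transpose identities are mutually consistent on every chain, so that $\mathcal D|_{\mathfrak u_0(\delta)\oplus N_\varphi}$ is simultaneously injective and surjective, is the real content behind the word ``direct'' in the reduction to the three preceding results.

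Finally, for the invariance I would appeal to Remark \ref{Orem}, which realises $\mathrm{Sym}^{\mathrm{ne}}_{F(\delta)}\cong O_{r_1}\times\cdots\times O_{r_s}$ and records the adjoint action as $(\mathrm{Ad}\,U\,Y)_{ba}=U_jY_{ba}U_i^{-1}$ for $a,b$ in rows $i,j$. This action fixes every block position $(b,a)$, so it preserves the support prescribed by $\varphi$; and since each $U_i$ is orthogonal, the conjugation $Y_{ba}\mapsto U_iY_{ba}U_i^{\,T}$ of a same-row block preserves symmetry and skew-symmetry and respects the relation $Y_{m(\varphi)}=\pm Y_\varphi$ of clause (2), while in the different-row case there is no symmetry condition to preserve. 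Hence $\mathrm{Ad}\,U$ maps $N_\varphi$ into itself, which is the second assertion and completes the proof.
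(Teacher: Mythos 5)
Your argument is correct and follows the same route the paper takes: the paper presents the theorem as a direct consequence of Propositions \ref{prolongprop} and \ref{coboundimprop} and Lemma \ref{complementlem} (with the invariance read off from \eqref{ADact} and Remark \ref{Orem}), and your proposal is exactly that deduction with the linear-algebra details — the reduction of \eqref{normacond} to complementing $\ker\mathcal D$, the chain-by-chain bookkeeping, and the role of $\mathfrak u_0(\delta)$ on the diagonal constraints $b=\rho$ — filled in. No discrepancies to report.
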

 
 The invariancy follows from the form of the adjoint action given by \eqref{ADact}. The condition that $b\neq \rho$ comes from the fact that we need to find a complement  to $\mathfrak u(\delta)+[\delta, \mathfrak g^0]\cap\mathfrak g^0$ and not to $[\delta, \mathfrak g^0]\cap\mathfrak g^0$.
 
Note that  the normalization condition chosen in the original works \cite{li1, li2} belongs to the class of normalization of the previous theorem and it corresponds to the following assignment $\varphi_0$:
\begin{enumerate}
\item Assume that $b$ and $\rho$ are not in the same row of $\widetilde \Delta$ and assume that $c$ is the first (i.e. the most left) box of the row of $b$ in $\widetilde \Delta$  and $d$ is a box in the row of $\rho$ such that $(c,d)$ belongs to the set \eqref{chainqb}, then 
\begin{itemize}
	\item  if $m(c)$ is located to the left of $d$, then $\varphi_0(b, \rho)=(c,d)$;
	\item if $m(c)$ is not located to the left of $d$, then $\varphi_0(b, \rho)$ is the only pair in the set \eqref{chainqb} of the form $(m(b_1), a_1)$ or  $(m\circ r(b_1), a_1)$, where $a_1$ and $b_1$ lie in the same column;
\end{itemize}
\item  Assume that $b$ and $\rho$ are in the same row of $\widetilde \Delta$ and $b$ is the $k$th box of this row. 
\begin{itemize}
	\item If $k$ is odd then $\varphi_0(b, \rho)$ is the unique pair in the set \eqref{chainqb} of the form $(m(e), e)$;
	\item If $k$ is odd then $\varphi_0(b, \rho)$ is the unique pair in the set \eqref{chainqb} of the form $(m\circ r(e), e)$.
\end{itemize}
\end{enumerate} 
In the light of the more general theory developed here and based on \cite{flag2, flag1} this particular normalization $N_{\varphi_0}$ for general Young diagram does not have any advantage compared to any other normalization of Theorem \ref{Ngthm}.

Finally, assume  that the assignment $\varphi$ is chosen and we used Theorem \ref{normframe} to construct the bundle of moving frames.
Then for any $a\in \widetilde\Delta $ the space 
$$V_a(t):=\Gamma(t)(E_a),$$ is independent of the choice of the normal frame $\Gamma$  and it is endowed  with the canonical Euclidean structure (see Remark \ref{Orem}). Besides from the invariancy with respect to the adjoint action and \eqref{ADact} it follows that for any $a$ and $b$ the corresponding matrix block 
$\Bigl(C_\Gamma(t)\Bigr)_{ba}$ of the structure function of $\Gamma$ defines the linear map from $V_a$ to $V_b$ , which is independent of the choice of the normal basis. We  call it the \emph{$(a,b)$-curvature map of the curve $\gamma$} at $t$. \footnote{The $(a,b)$-curvature defined in the original work \cite{li1,li2} for the particular normalization condition chosen there corresponds to $\bigl(a, m(b)\bigr)$-curvature  here.} 

We conclude with several examples.

\begin{example}
	\label{regexamplec} This is the continuation of Example \ref {regexample}. The reduced diagram of a regular curve in Lagrangian Grassmannian consists of one box, say $\rho$. In this case there is only one box $b=m(\rho)$ in $\widetilde \Delta$, which differs from $\rho$ and hence, there is only one choice of the assignment $\varphi$, acting as the identity on the pair $\bigl(m(\rho), \rho\bigr)$. There is the unique nontrivial curvature map in this case, the $\bigl(m(\rho), \rho\bigr)$-curvature map, and it coincides with the curvature map of the regular curve defined in \cite{agrgam1, jac1}.
\end{example}

\begin{example} \emph{(The case of rectangular Young diagram)}
Assume that the Young diagram $D$ of $\Lambda$ is rectangular. Then the reduced  diagram $\Delta$ consists of only one row. Hence, for an assignment $\varphi$ the condition (1) for the corresponding normalization condition $N_\varphi$ is void. Now, if we use the assignment $\varphi_0$ as above, then for a normal frame $\Gamma$  the only possibly nonzero blocks  $\Bigl(C_\Gamma(t)\Bigr)_{ba}$ for its structure function  (where $b$ is not located to the right of $a$)  are when $(b,a)=(m(e), e)$ or $(b,a)=(m\circ r (e), e)$, where $e\in \Delta$. Moreover, the matrices $\Bigl(C_\Gamma(t)\Bigr)_{m(e)e}$ are symmetric and the matrices $\Bigl(C_\Gamma(t)\Bigr)_{m\circ r(e)e}$ are 
skew-symmetric.
\end{example}

\begin{example} (\emph{The case of rank $1$ curves in Lagrangian Grassmannians})
Let $\Lambda$ be an  equiregular and ample and  that \begin{equation}
\label{jump1}
\dim \Lambda^{(-1)}-\dim \Lambda =1.
\end{equation} The last condition is equivalent to the fact that the rank of the linear map $\frac{d}{dt} \Lambda(t)$ is equal to $1$. Such curves are called \emph{rank $1$ curves} in Lagrangian Grassmannians  and they appear as Jacobi curves of sub-Riemannian structures on rank $2$ distributions. From \eqref{jump1} and  the assumptions that the curve is ample and equiregular it follows  that 
$$\dim \Lambda^{(j-1)}-\dim \Lambda^{(j)}=1, \quad 0\leq  -j< \cfrac{1}{2}\dim V.$$	
Hence, the Young diagram $D$ of $\Lambda$ consists of one row of length $\cfrac{1}{2}\dim V$, i.e. this is a particular case of the previous example. 
In this case the corresponding matrices $\Bigl(C_\Gamma(t)\Bigr)_{ba}$ are $1\times 1$ matrix valued functions, i.e. they are usual (scalar-valued) functions.
and  if we use the normalization condition $N_{\varphi_0}$, then the only possibly nonzero entries  $\Bigl(C_\Gamma(t)\Bigr)_{ba}$ of the structure functions with $b$ located not to the right of $a$   are $\Bigl(C_\Gamma(t)\Bigr)_{m(e) e}$ with $e\in\Delta$, as a skew-symmmetric $1\times 1$ matrices are zero. 
Besides, by Remark \ref{Orem} the group $\mathrm{Sym}^{\mathrm{ne}}_{F(\delta)}$ is isomorphic to $\{\pm I\}$ and there are exactly two normal frames which differ by a sign. The tuple of functions  
\begin{equation}
\label{completeinv}
\left\{\Bigl(C_\Gamma(t)\Bigr)_{m(e) e}\right\}_{e\in\Delta}
\end{equation}
 for the complete system of invariants of the curve $\Lambda$, i.e. two curves are $Sp(V)$-equivalent if and only if the corresponding tuples as in \eqref{completeinv}
are equal.
\end{example}




{\bf Acknowledgenent}
\emph{I would like to thank David Sykes and Chengbo Li for editing and careful reading of this text. }

\end{document}